\newtheorem{thm}{Theorem}[section]
\newtheorem{lem}[thm]{Lemma}
\newtheorem{prop}[thm]{Proposition}
\theoremstyle{definition}
\newcommand{\Z}{\mathbb Z}
\newcommand{\R}{\mathbb R}
\newcommand{\C}{\mathbb C}
\newcommand{\mf}{\mathfrak}
\newcommand{\mc}{\mathcal}
\newcommand{\mb}{\mathbf}
\newcommand{\mh}{\mathbb}
\newcommand{\mr}{\mathrm}
\newcommand{\enuma}[1]{\begin{enumerate}[\textup{(}a\textup{)}] {#1} \end{enumerate}}
\newcommand{\Fr}{\mathrm{Frob}}
\newcommand{\Sc}{\mathrm{sc}}
\newcommand{\nr}{\mathrm{nr}}
\newcommand{\Res}{\mathrm{Res}}
\newcommand{\der}{\mathrm{der}}
\newcommand{\Hom}{\mathrm{Hom}}
\newcommand{\isom}{\xrightarrow{\,\sim\,}}
\newcommand{\ff}{\mathfrak{f}}
\newcommand{\Xo}{\mathfrak{X}^0}
\numberwithin{equation}{section}
\begin{document}

\title{On depth-zero characters of $p$-adic groups}
\author{Maarten Solleveld}
\address{Institute for Mathematics, Astrophysics and Particle Physics, 
Radboud Universiteit Nijmegen, Nijmegen,
The Netherlands}
\email{m.solleveld@science.ru.nl}

\author{Yujie Xu}
\address{Department of Mathematics, 
Columbia University, 
New York, NY, USA}
\email{xu.yujie@columbia.edu}

\date{\today}
\subjclass[2010]{11S37, 22E50, 20G25}

\begin{abstract}
We show new properties of the Langlands correspondence for arbitrary 
tori over local fields. 
Furthermore, we give a detailed analysis of depth-zero characters of reductive $p$-adic groups, for groups that may be \textit{wildly ramified}. We present several different definitions
of ``depth-zero'' for characters, and show that these notions are in fact equivalent.
These results are useful for proving new cases of local Langlands correspondences, in particular for depth zero representations. 
\end{abstract}

\maketitle

\section{Introduction}

The local Langlands correspondence
(LLC) for tori is a starting point for the entire local
Langlands program. Langlands famously discovered it in the
1960s and published it much later in \cite{Lan2}. 
This correspondence is crucial to parametrizing those irreducible 
representations of reductive groups (over local fields) whose construction
involves characters of maximal tori -- these constitute the majority of all
irreducible representations of such groups. 

Let $F$ be a local field with Weil group $\mb W_F$. For an $F$-torus $\mc T$, 
let $T^\vee$ be its complex dual group, endowed with a $\mb W_F$-action. 
Recall the following:

\begin{thm}[LLC for tori]\label{thm:1.6} 
\textup{\cite[\S 9]{Bor}, \cite[Theorem 2]{Lan2}, \cite[Theorem 7.5]{Yu}} \\
There exists a natural isomorphism of topological groups
\begin{equation}\label{eqn:LLC-for-tori}
\Hom (\mc T (F), \C^\times) \isom H^1 (\mb W_F, T^\vee) :\:
\chi \mapsto \varphi_\chi .
\end{equation}
The family of these isomorphisms, for all tori over local fields $F$,
is functorial with respect to homomorphisms of $F$-tori
and generalizes local class field theory.
\end{thm}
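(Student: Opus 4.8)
The plan is a two-step dévissage: first settle the case of quasi-trivial (induced) tori, where the statement is local class field theory in disguise, and then deduce the general case by resolving an arbitrary $F$-torus by quasi-trivial ones and comparing long exact sequences on the two sides.

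\smallskip
\emph{Step 1: quasi-trivial tori.} For $\mathcal{T} = \mathbb{G}_m$ one has $\mathcal{T}(F) = F^\times$ and $T^\vee = \mathbb{C}^\times$ with trivial $\mathbf{W}_F$-action, so $H^1(\mathbf{W}_F, T^\vee) = \Hom_{\mathrm{cont}}(\mathbf{W}_F^{\mathrm{ab}}, \mathbb{C}^\times)$, and the Artin reciprocity isomorphism $\mathrm{rec}_F \colon F^\times \isom \mathbf{W}_F^{\mathrm{ab}}$, with its standard normalization, produces the isomorphism $\chi \mapsto \varphi_\chi$ and identifies it as a topological isomorphism; this normalization is exactly what makes the whole family generalize local class field theory. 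For $\mathcal{T} = \Res_{E/F}\mathbb{G}_m$ with $E/F$ finite separable, $\mathcal{T}(F) = E^\times$ while $T^\vee \cong \mathrm{Ind}_{\mathbf{W}_E}^{\mathbf{W}_F}\mathbb{C}^\times$, so Shapiro's lemma identifies $H^1(\mathbf{W}_F, T^\vee) \cong H^1(\mathbf{W}_E, \mathbb{C}^\times)$ and reduces matters to $\mathbb{G}_m$ over $E$; finite products of such tori then cover all quasi-trivial $\mathcal{T}$. The one thing to verify here is that the resulting maps are compatible with every morphism of quasi-trivial tori — such morphisms being generated by inclusions, projections and the norm maps $\mathrm{Nm}_{E'/E}$ — which amounts to the functoriality of $\mathrm{rec}$ under field extension and corestriction/transfer, a standard part of local class field theory. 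This compatibility pins down $\chi \mapsto \varphi_\chi$ unambiguously on quasi-trivial tori and makes it natural there.

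\smallskip
\emph{Step 2: dévissage.} For an arbitrary $F$-torus $\mathcal{T}$, choose an exact sequence of $F$-tori $1 \to \mathcal{T} \to \mathcal{Q} \to \mathcal{R} \to 1$ with $\mathcal{Q}$ quasi-trivial — possible since the character lattice $X^*(\mathcal{T})$, like any $\Gamma_F$-lattice, is a quotient of a permutation lattice — and iterate for $\mathcal{R}$ to get a resolution of $\mathcal{T}$ by quasi-trivial tori; dualizing yields a coresolution of $T^\vee$ by induced $\mathbf{W}_F$-modules. Taking $F$-points produces the long exact Galois-cohomology sequence $H^\bullet(F,-)$, in which $H^1(F,\mathcal{Q}) = 0$ by Hilbert 90, and to which one may apply the exact contravariant functor $\Hom(-,\mathbb{C}^\times)$, exact because $\mathbb{C}^\times$ is an injective $\mathbb{Z}$-module. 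On the dual side the coresolution gives the long exact sequence of continuous cohomology $H^\bullet(\mathbf{W}_F,-)$, with the higher cohomology of the induced modules under control. The maps of Step 1, being natural, fit these two sequences into a commutative ladder, and the five lemma then yields that $\chi \mapsto \varphi_\chi$ is well defined for all $\mathcal{T}$ and bijective. That it is a homeomorphism is seen by carrying the topologies through Steps 1--2: each group decomposes, compatibly, as a compact Pontryagin-dual part times a complex torus, and all the identifications respect this. Functoriality in $\mathcal{T}$ is built in, since $\mathrm{rec}_F$, Shapiro and the resolutions are all natural, and the case $\mathcal{T} = \mathbb{G}_m$ is local class field theory.

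\smallskip
The main obstacle is the commutativity of the ladder in Step 2: one must check that the connecting homomorphisms of the two long exact sequences correspond under the Step 1 identifications, with all normalizations — of $\mathrm{rec}_F$, of Shapiro's isomorphism, of the relevant cup products and of the local fundamental class in $H^2(\mathrm{Gal}(E/F), E^\times)$ — chosen consistently. This is in essence a functorial form of local Tate--Nakayama duality; granting it, the five-lemma argument and the topological refinement are routine.
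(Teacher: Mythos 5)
The paper does not actually prove Theorem \ref{thm:1.6}; it quotes it from \cite{Lan2}, \cite[\S 9]{Bor} and \cite[Theorem 7.5]{Yu}, so your proposal must stand on its own, and Step 2 has a genuine gap. Your d\'evissage is circular: in $1 \to \mc T \to \mc Q \to \mc R \to 1$ the quotient $\mc R$ is again an arbitrary torus (the kernel of a surjection from a permutation lattice onto $X^*(\mc T)$ is almost never a permutation lattice), and no invariant decreases when you ``iterate'' -- the dimension grows and the splitting field does not shrink -- so you never reach a ladder in which four of the five columns are already known isomorphisms. An infinite resolution by quasi-trivial tori does exist, but then the five lemma is unavailable and one would need both sides to be derived/$\delta$-functors with controlled higher terms, which is not something the functor $\mc T \mapsto \Hom(\mc T(F),\C^\times)$ gives you for free; indeed that is essentially what has to be proved.

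Moreover, even for a single short exact sequence the two long exact sequences do not have the shape your ladder requires. On the character side, since $H^1(F,\mc Q)=1$, taking $F$-points gives $1 \to \mc T(F) \to \mc Q(F) \to \mc R(F) \to H^1(F,\mc T) \to 1$, and continuous duality turns this into a sequence that terminates at $\Hom(\mc T(F),\C^\times)$ and begins with the Pontryagin dual of the finite group $H^1(F,\mc T)$; on the parameter side the sequence continues past $H^1(\mb W_F,T^\vee)$ into $H^2(\mb W_F,R^\vee) \to H^2(\mb W_F,Q^\vee)$. To chase injectivity and surjectivity of the middle vertical arrow you must therefore identify the dual of $H^1(F,\mc T)$ with invariant/$\pi_0$-type terms on the dual side and control $H^2(\mb W_F,-)$ of dual tori -- statements of the same depth as the theorem itself (Tate--Nakayama/Kottwitz-type duality for the Weil group). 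This is exactly why the cited proofs do not argue as you do: Langlands and Yu construct $\chi \mapsto \varphi_\chi$ directly (via explicit cocycles, respectively cup product with the fundamental class in $H^2(\mr{Gal}(K/F),K^\times)$ for a splitting field $K$, and in \cite{Yu} by organizing both sides into genuine cohomological functors in all degrees before dimension-shifting from induced modules), so your closing remark that ``granting the commutativity of the ladder, the rest is routine'' hides precisely the substance of the theorem. A smaller point: exactness of $\Hom(-,\C^\times)$ must be invoked for continuous characters of locally compact groups (Pontryagin duality, no-small-subgroups), not from injectivity of $\C^\times$ as an abstract $\Z$-module; that is fixable, unlike the structural issue above. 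Step 1, by contrast, is fine and agrees with the standard normalization via $\mathrm{rec}_F$ and Shapiro's lemma.
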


The topologies in Theorem \ref{thm:1.6} are the compact-open topologies for maps 
$\mc T (F) \to \C^\times$ and for maps $\mb W_F \to T^\vee$. While working on 
\cite{SoXu1, SoXu2}, we encountered some interesting properties 
of the LLC for tori, which were not in previous literature.

Consider a separable extension $E/F$ of local fields. 
It is well-known that Artin reciprocity transfers the norm map 
$N_{E/F} : E^\times \to F^\times$ into the inclusion $\mb W_E \to \mb W_F$
\cite[Proposition 5.4]{Neu}. 
We show that the LLC for tori \eqref{eqn:LLC-for-tori} enjoys a similar property. 
To formulate this precisely, we need the norm map for an $F$-torus $\mc T$, 
which can be defined as
\[
N_{E/F} : 
\mc T (E)  
\to  
\mc T (F),\quad 
t 
\mapsto  
\prod\nolimits_{\gamma \in \mb W_F / \mb W_E} \gamma (t).
\]
Let ${N^*_{E/F}} : \Hom (\mc T(F),\C^\times) \to \Hom (\mc T (E),\C^\times)$ denote
composition by $N_{E/F}$.

\begin{prop}[see Proposition \ref{prop:1.8}]\label{intro:prop:1.8} \ \\
The following diagram commutes:
\[\begin{tikzcd}
    \Hom (\mc T (E), \C^\times) \arrow[]{r}{\sim} & H^1 (\mb W_E ,T^\vee) \\
\Hom (\mc T (F), \C^\times) \arrow[]{r}{\sim}\arrow[]{u}[swap]{N^*_{E/F}} & 
H^1 (\mb W_F, T^\vee)\arrow[]{u}{\Res^{\mb W_F}_{\mb W_E}}
\end{tikzcd}\]
In other words, $\varphi_\chi |_{\mb W_E} = \varphi_{\chi \circ N_{E/F}}$
for all smooth characters $\chi : \mc T (F) \to \C^\times$.
\end{prop}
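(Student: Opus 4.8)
The plan is to realize both vertical maps in the diagram as instances of the functoriality in Theorem~\ref{thm:1.6}, applied to the \emph{norm morphism} of $F$-tori, and then to identify the resulting map on Galois cohomology with restriction by a standard computation with (co)induced modules. Write $\mc T_E$ for the base change of $\mc T$ to $E$, and let $\nu : \Res_{E/F}(\mc T_E) \to \mc T$ be the norm morphism of $F$-tori. Under the canonical identification $(\Res_{E/F}(\mc T_E))(F) = \mc T_E(E) = \mc T(E)$, the morphism $\nu$ is on $F$-points exactly the map $N_{E/F} : \mc T(E) \to \mc T(F)$, $t \mapsto \prod_{\gamma \in \mb W_F/\mb W_E}\gamma(t)$ of the statement; this is the defining multiplicativity of $\nu$, and can be read off from the cocharacter lattices. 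Hence the left vertical arrow $N^*_{E/F}$ is $\nu^*$. Dually, $\nu$ induces the canonical map $\widehat\nu : T^\vee \to (\Res_{E/F}(\mc T_E))^\vee = \ind_{\mb W_E}^{\mb W_F}T^\vee$, which, since $[E:F]<\infty$, is the unit of the (restriction, (co)induction) adjunction at $T^\vee$; as the $\mb W_F$-action on $T^\vee$ factors through a finite quotient there is no subtlety about Weil versus Galois groups. Applying the functoriality of Theorem~\ref{thm:1.6} to $\nu$ produces a commutative square
\[\begin{tikzcd}
    \Hom (\mc T (E), \C^\times) \arrow[]{r}{\sim} & H^1 (\mb W_F ,\ind_{\mb W_E}^{\mb W_F} T^\vee) \\
\Hom (\mc T (F), \C^\times) \arrow[]{r}{\sim}\arrow[]{u}[swap]{N^*_{E/F}} &
H^1 (\mb W_F, T^\vee)\arrow[]{u}{\widehat\nu_*}
\end{tikzcd}\]
whose top arrow is the LLC for the $F$-torus $\Res_{E/F}(\mc T_E)$.

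The second step is the compatibility of the LLC for tori with Weil restriction of scalars: via Shapiro's lemma $H^1(\mb W_F, \ind_{\mb W_E}^{\mb W_F}T^\vee) \cong H^1(\mb W_E, T^\vee)$ and the identification $(\Res_{E/F}(\mc T_E))(F) = \mc T(E)$, the top arrow of the square above is carried onto the LLC isomorphism $\Hom(\mc T(E),\C^\times) \isom H^1(\mb W_E, T^\vee)$ for $\mc T_E$ over $E$. This is one of the standard properties of the constructions of \cite{Bor, Yu}; alternatively it follows from Theorem~\ref{thm:1.6} by choosing a two-term resolution $1 \to \mc T \to \mc Q_0 \to \mc Q_1$ of $\mc T$ by induced $F$-tori, base changing to $E$ and applying $\Res_{E/F}$, which reduces the claim to the case where $\mc T$ is induced -- where it holds essentially by construction, both isomorphisms being assembled from local class field theory through Shapiro's lemma. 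Feeding this identification into the square replaces $\widehat\nu_*$ on the right by the composite $H^1(\mb W_F, T^\vee) \xrightarrow{\widehat\nu_*} H^1(\mb W_F, \ind_{\mb W_E}^{\mb W_F}T^\vee) \xrightarrow{\sim} H^1(\mb W_E, T^\vee)$.

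The last ingredient is purely group-cohomological: the composite of the map induced by the unit $T^\vee \to \ind_{\mb W_E}^{\mb W_F}T^\vee$ with the Shapiro isomorphism is the restriction map $\Res^{\mb W_F}_{\mb W_E}$. This is a standard description of restriction through (co)induced modules and may simply be cited. Combining the three steps yields the commutativity of the diagram in the statement, that is, $\varphi_\chi|_{\mb W_E} = \varphi_{\chi\circ N_{E/F}}$ for every smooth character $\chi$ of $\mc T(F)$.

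I expect the main obstacle to be organizational rather than conceptual: one must align all the canonical identifications so that the replacement of the right-hand vertical arrow in the second step is literally valid -- notably the agreement of the adjunction-theoretic $\nu$ on $F$-points with the explicit product formula for $N_{E/F}$, the identification of $\widehat\nu$ with the unit map, and (if one proves rather than cites it) the restriction-of-scalars compatibility. For the latter, the reduction to induced tori also requires checking that $N_{E/F}$ is compatible with the closed immersion $\mc T \hookrightarrow \mc Q_0$ and that a smooth character of $\mc T(F)$ extends to $\mc Q_0(F)$; both are routine, the latter by Pontryagin duality for the closed subgroup $\mc T(F) \subseteq \mc Q_0(F)$.
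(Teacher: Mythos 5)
Your argument is correct, but it takes a genuinely different route from the paper. You factor the statement through the norm morphism $\nu : \Res_{E/F}(\mc T \times_F E) \to \mc T$ of $F$-tori: functoriality of Theorem \ref{thm:1.6} applied to $\nu$, the compatibility of the correspondence with Weil restriction of scalars (Shapiro's lemma), and the formal identity that the unit $T^\vee \to \mr{Ind}_{\mb W_E}^{\mb W_F} T^\vee$ followed by the Shapiro isomorphism is $\Res^{\mb W_F}_{\mb W_E}$ together yield $\varphi_{\chi \circ N_{E/F}} = \varphi_\chi |_{\mb W_E}$; your identifications of $\nu$ on $F$-points with $N_{E/F}$ and of $\widehat\nu$ with the unit map are both correct. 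The one input beyond the literal statement of Theorem \ref{thm:1.6} is the restriction-of-scalars compatibility; this is indeed available in the literature (it is part of the standard package of properties of the correspondence of \cite{Lan2,Yu}, valid over archimedean fields as well), and your fallback reduction to induced tori is essentially the standard uniqueness argument, so citing it is legitimate, though you should be aware it is the load-bearing step. By contrast, the paper proves the proposition by unwinding the construction of \cite[\S 7.7]{Yu} over a Galois splitting field $K \supset E$: both horizontal arrows are expressed through $\Hom(\mc T(K),\C^\times)_{\mr{Gal}(K/\cdot)}$ and $H^1(\mb W_K,T^\vee)_{\mb W_\cdot / \mb W_K}$, explicit averaging maps are inserted, and a direct cocycle computation with the corestriction formula of \cite[\S 1.5]{NSW} shows that $\mr{cor}_{K/E}$ composed with the averaging map equals $\Res^{\mb W_F}_{\mb W_E} \circ \mr{cor}_{K/F}$; the archimedean case $\C/\R$ is then handled separately by explicit formulas for $\varphi_\chi$ and $\chi$. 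Your approach is more formal and treats all local fields uniformly, at the cost of invoking (or proving) the Shapiro compatibility; the paper's approach stays within the data it quotes but pays with an explicit corestriction computation and a separate archimedean argument.
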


\noindent
The main arithmetic significance of Theorem \ref{thm:1.6} for non-archimedean 
local fields $F$ lies in the fact that \eqref{eqn:LLC-for-tori} preserve depths 
\cite[Theorem 7.10]{Yu}, in the following sense. If $\chi$ is trivial on the $r$-th 
filtration subgroup $T_r$ of $T = \mc T (F)$, then $\varphi_\chi$ is trivial on the 
$r$-th ramification subgroup $\mb W_F^r \subset \mb W_F$, and vice versa. However, 
in \cite{Yu}, this is only proven under the assumption that $\mc T$ splits over a 
tamely ramified extension of $F$. In fact, it is known that Theorem \ref{thm:1.6} 
often does not preserve depths for wildly ramified tori (see for example 
\cite[\S 7]{MiPa}).

Fortunately, things become simpler in depth zero. Let $T^0$ be the unique 
parahoric subgroup of $T = \mc T (F)$, and let $T_{0+}$ be its pro-$p$ radical.
Recall that a character of $T$ has depth zero if it is trivial on $T_{0+}$. Let 
\[
\mb P_F = \mb W_F^{0+} \;\subset\; \mb I_F = \mb W_F^0 \;\subset\; \mb W_F
\]
be the wild inertia and inertia subgroups of $\mb W_F$.~Recall that a Langlands 
parameter $\varphi \in H^1 (\mb W_F, T^\vee)$, presented as a 1-cocycle 
$\mb W_F \to T^\vee$, has depth zero if $\varphi (w) = 1$ for all $w \in \mb P_F$. 
The condition $\varphi |_{\mb P_F} = 1$ guarantees that $\varphi (\mb W_F) \subset 
T^{\vee,\mb P_F}$, so the depth-zero part of $H^1 (\mb W_F, T^\vee)$ is 
$H^1 \big( \mb W_F / \mb P_F, T^{\vee,\mb P_F} \big)$.
The following result is new for $F$-tori that only split over a wildly ramified extension of $F$. 

\begin{prop}[see Proposition \ref{prop:1.7}]\label{intro:prop:1.7} \ \\
Let $\mc T$ be an arbitrary torus over a non-archimedean local field $F$. 
The local Langlands correspondence for tori restricts to an isomorphism
between the depth-zero parts on both sides of the correspondence:
\[
\Hom (T / T_{0+}, \C^\times) \cong H^1 \big( \mb W_F / \mb P_F, T^{\vee,\mb P_F} \big).
\]
\end{prop}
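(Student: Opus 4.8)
\emph{Strategy.} My plan is to reduce to the case of quasi-trivial (induced) tori, where the statement becomes local class field theory, and then to descend along a flasque resolution while tracking how the depth-zero conditions interact with the maps involved. When $\mc T$ splits over a tamely ramified extension the Proposition is already contained in \cite[Theorem 7.10]{Yu}, so the issue is to remove the tameness hypothesis. This is possible at depth zero --- though not at higher depth --- because of one structural fact on each side: the reduction map identifies $T^0/T_{0+}$ with the group of $\mathbb F_q$-points of a \emph{torus} over the residue field (the maximal torus quotient of the special fibre of the connected N\'eron model of $\mc T$), so $[T^0:T_{0+}]$ is prime to $p$; dually $\mb I_F/\mb P_F\cong\prod_{\ell\ne p}\Z_\ell$ is pro-prime-to-$p$, so every pro-$p$ subgroup of $\mb I_F$ lies in $\mb P_F$, and in particular $\mb P_E=\mb P_F\cap\mb W_E$ for every finite separable $E/F$.

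First I would treat $\mc Q=\prod_i\Res_{E_i/F}\mathbb G_m$. Here $Q=\prod_iE_i^\times$, $Q^0=\prod_i\mc O_{E_i}^\times$ and $Q_{0+}=\prod_i(1+\mf m_{E_i})$, and by Shapiro's lemma $H^1(\mb W_F,Q^\vee)=\prod_iH^1(\mb W_{E_i},\C^\times)$, so the LLC for $\mc Q$ is the product of the class-field-theory isomorphisms $\Hom(E_i^\times,\C^\times)\cong\Hom(\mb W_{E_i}^{\mathrm{ab}},\C^\times)$. The classical description of the ramification filtration under local class field theory (Serre, \emph{Local Fields}, Ch.~XV) identifies $1+\mf m_{E_i}$ with the image of $\mb P_{E_i}$ in $\mb W_{E_i}^{\mathrm{ab}}$, so a character of $E_i^\times$ has depth zero iff the associated character of $\mb W_{E_i}$ kills $\mb P_{E_i}$. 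Combining this with $\mb P_{E_i}=\mb P_F\cap\mb W_{E_i}$ and Mackey's formula for $\Res^{\mb W_F}_{\mb P_F}\mathrm{Ind}^{\mb W_F}_{\mb W_{E_i}}\C^\times$ shows that Shapiro's isomorphism matches $\{\varphi:\varphi|_{\mb P_F}=1\}$ on the left with $\prod_i\{\psi_i:\psi_i|_{\mb P_{E_i}}=1\}$ on the right, which is the Proposition for $\mc Q$.

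For general $\mc T$ I would pick a flasque resolution $1\to\mc F\to\mc Q\to\mc T\to 1$ with $\mc Q$ quasi-trivial and $\mc F$ flasque (Colliot-Th\'el\`ene--Sansuc). Since $H^1(F,\mc F)=0$, the map $q:\mc Q(F)\to T$ is surjective, and dually $1\to T^\vee\to Q^\vee\to F^\vee\to 1$. The Kottwitz homomorphism is functorial and kills the pro-$p$ radicals, so $q(Q^0)\subseteq T^0$, and in fact $q(Q^0)=T^0$ (lift an element of $T^0$ to $\mc Q(F)$ and correct it by $\mc F(F)$, using surjectivity of $\kappa_{\mc F}$); then $q(Q_{0+})$ is a normal pro-$p$ subgroup of $T^0$ with $T^0/q(Q_{0+})$ prime-to-$p$, while $T_{0+}/q(Q_{0+})$ is pro-$p$, whence $q(Q_{0+})=T_{0+}$. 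Consequently $q^*\chi$ has depth zero iff $\chi$ does, for any $\chi\in\Hom(T,\C^\times)$. Now given a depth-zero parameter $\varphi\in H^1(\mb W_F,T^\vee)$, composing a representing cocycle that kills $\mb P_F$ with the equivariant map $T^\vee\hookrightarrow Q^\vee$ shows that its image in $H^1(\mb W_F,Q^\vee)$ is again of depth zero; by the quasi-trivial case and the functoriality of the LLC (Theorem \ref{thm:1.6}), the character of $\mc Q(F)$ it corresponds to is $q^*\chi_\varphi$, which is therefore of depth zero, and hence so is $\chi_\varphi$. This is one inclusion.

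The main obstacle is the reverse inclusion: that a depth-zero \emph{character} $\chi$ of $T$ has a depth-zero parameter. Pulling back along $q$ shows that $q^\vee_*\varphi_\chi$ is of depth zero, i.e.\ $\varphi_\chi|_{\mb P_F}$ dies in $H^1(\mb P_F,Q^\vee)$; one must deduce that it already dies in $H^1(\mb P_F,T^\vee)$. By the exact sequence $0\to T^\vee\to Q^\vee\to F^\vee\to 0$ this amounts to controlling the connecting map $H^0(\mb P_F,F^\vee)\to H^1(\mb P_F,T^\vee)$, and this is precisely where the flasqueness of $\mc F$ --- via the vanishing statements $H^1(F,\mc F)=0$, $\widehat H^0(\mathrm{Gal}(L/F),\mc F(L))=0$, $\widehat H^{-1}(\mathrm{Gal}(L/F),X^*(\mc F))=0$ over $F$ and its finite extensions --- together with the prime-to-$p$ bookkeeping above must be brought in, to check that no obstruction survives. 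I expect this cohomological bookkeeping, rather than any conceptual difficulty, to be the technical heart of the argument; once it is in place, combining it with the previous paragraph shows that the LLC for tori carries $\Hom(T/T_{0+},\C^\times)$ isomorphically onto $H^1(\mb W_F/\mb P_F,T^{\vee,\mb P_F})$.
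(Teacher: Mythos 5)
Your reduction strategy stalls exactly at the step that carries all the content of the Proposition. The forward direction (depth-zero parameter $\Rightarrow$ depth-zero character) is the easy one; the substance is the converse, and there you only write that the connecting map $H^0(\mb P_F, F^\vee) \to H^1(\mb P_F, T^\vee)$ ``must'' be controlled by flasqueness plus prime-to-$p$ bookkeeping, without exhibiting the argument. This cannot be waved through: the flasqueness conditions you list are statements about Galois cohomology of $X_*(\mc F)$ and of $\mc F$ over $F$ and its finite extensions, and they do not obviously say anything about the kernel of $H^1(\mb P_F, T^\vee) \to H^1(\mb P_F, Q^\vee)$ for the complex dual tori restricted to wild inertia. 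Since depth preservation genuinely fails for wildly ramified tori at positive depth, any proof must isolate what is special about depth zero, and your sketch defers precisely that point. Moreover, the scaffolding itself has a flaw: for a flasque resolution $1 \to \mc F \to \mc Q \to \mc T \to 1$ one does \emph{not} have $H^1(F,\mc F)=0$ in general --- by Colliot-Th\'el\`ene--Sansuc this group is identified with $\mc T(F)/R$, which is nontrivial e.g.\ for norm-one tori of biquadratic extensions --- so $\mc Q(F) \to \mc T(F)$ need not be surjective, and the identities $q(Q^0)=T^0$, $q(Q_{0+})=T_{0+}$ on which your transfer of the depth-zero condition rests are unjustified. (A resolution with quasi-trivial middle term and kernel with vanishing $H^1$ over $F$ cannot exist for all $\mc T$, for the same $R$-equivalence reason.)

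For comparison, the paper's proof avoids resolutions and Shapiro altogether: starting from the weakly unramified statement \eqref{eq:1.15}, it obtains $\Hom(T^0,\C^\times) \cong H^1_e(\mb I_F, T^\vee)$ (classes extendable to $\mb W_F$), and then characterizes the depth-zero part on \emph{both} sides intrinsically as the largest subgroup admitting no nontrivial pro-$p$ quotient --- using that $T_{0+}$ is the unique maximal pro-$p$ subgroup of $T^0$ (Pontryagin duality) and that $\mb I_F/\mb P_F$ is pro-prime-to-$p$, while any class of $H^1(\mb I_F,T^\vee)$ nontrivial on $\mb P_F$ has $p$-power order in a suitable finite quotient. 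This is exactly the prime-to-$p$ dichotomy you gesture at, but implemented so that no functoriality or descent along a resolution is needed; if you want to salvage your route, you would need to supply an argument of this kind for the connecting-map obstruction, and repair the surjectivity claim, before the quasi-trivial case can be transported to general $\mc T$.
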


For comparison, recall that the group of weakly unramified characters of $T$ is 
$\Hom (T/T^0, \C^\times)$. By \cite[\S 3.3.1]{Hai}, the LLC for tori restricts to an isomorphism
\begin{equation}\label{eq:1.15}
\Hom (T / T^0 ,\C^\times) \cong H^1 \big( \mb W_F / \mb I_F, T^{\vee,\mb I_F} \big) . 
\end{equation}

Our next results concern characters of arbitrary reductive groups $G = \mc G (F)$,
over a local field $F$.
In \cite{Lan2,Bor}, Langlands also parametrized these, as follows. 
Let $Z(G^\vee)$ be the centre of the complex dual group $G^\vee$ and let 
$G / G_\Sc$ be the cokernel of the canonical map $G_\Sc \to G$, where $G_\Sc$ denotes 
the simply connected cover of the derived group of $G$. In these terms, Langlands 
constructed a natural homomorphism
\begin{equation}\label{eq:1.1}
H^1 (\mb W_F, Z(G^\vee)) \longrightarrow \Hom ( G/G_\Sc, \C^\times) .
\end{equation}
We prove in Theorem \ref{thm:1.1} that this map is bijective.
Note that bijectivity was neither claimed nor proven in \cite{Lan2,Bor}. 

Suppose now that $F$ is non-archimedean, so that we can consider depths.
In order to study depth-zero representations of $G$, it is tremendously helpful to know 
which characters of $G / G_\Sc$ have depth zero (see \cite{SoXu1}). There are several 
conceivable and reasonable notions 
of depth zero, depending on the point of view. If one views \eqref{eq:1.1} as a 
generalization of the LLC for tori, then it is natural to require that $\chi : G/G_\Sc \to
\C^\times$ has depth zero when restricted to every (or one) maximal torus $T \subset G$.
On the other hand, considered as a $G$-representation, one would require that $\chi$ has 
depth zero when restricted to one (or every) parahoric subgroup $G_{\ff,0}$ (i.e.~$\chi$ is
trivial on the pro-unipotent radical $G_{\ff,0+}$ of $G_{\ff,0}$). Fortunately, it
turns out that all these notions are equivalent.

\begin{thm}[see Theorem \ref{thm:1.4} and Lemma \ref{lem:1.9}]\label{thm:I}\ \\
For a character $\chi : G \to \C^\times$ that is trivial on $G_\Sc$, the following are equivalent:
\begin{enumerate}[(i)]
\item $\chi |_T$ has depth zero, for every maximal torus $T \subset G$,
\item $\chi |_T$ has depth zero, for one maximal $F$-torus $\mc T \subset \mc G$
which contains a maximal unramified $F$-torus of $\mc G$,
\item $\ker (\chi)$ contains $G_{\ff,0+}$, for every parahoric subgroup 
$G_{\ff,0} \subset G$,
\item $\ker (\chi)$ contains $G_{\ff,0+}$, for one Iwahori subgroup $G_{\ff,0} \subset G$.
\end{enumerate}
\end{thm}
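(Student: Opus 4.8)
The plan is to establish the cycle of implications $(i) \Rightarrow (ii) \Rightarrow (iv) \Rightarrow (iii) \Rightarrow (i)$, where the first and last implications are essentially formal, and the real content lies in passing between the ``torus picture'' and the ``parahoric picture'' of depth zero. The implication $(i) \Rightarrow (ii)$ is immediate once we know that a maximal $F$-torus $\mc T \subset \mc G$ containing a maximal unramified $F$-torus exists; such tori are well known to exist (they are the ones whose apartment meets the given facet, or equivalently they arise from maximal tori of the reductive quotient of a parahoric), so this is just a matter of citing the existence. Similarly $(iii) \Rightarrow (iv)$ and $(iii) \Rightarrow (i)$ need a small argument: for $(iii) \Rightarrow (i)$, given any maximal torus $T = \mc T(F)$, one observes that $T_{0+} = T \cap G_{\ff,0+}$ for a suitable facet $\ff$ (namely any facet in the apartment of a maximal split subtorus of the ``unramified part'' of $\mc T$), so triviality of $\chi$ on $G_{\ff,0+}$ forces triviality on $T_{0+}$, i.e.\ depth zero of $\chi|_T$. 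Here one uses the standard compatibility between the Moy--Prasad filtration on $T$ and the parahoric filtration on $G$, valid for arbitrary $\mc G$.

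The heart of the matter is $(ii) \Rightarrow (iv)$: from depth-zero on a single well-chosen maximal torus we must deduce that $\chi$ kills $G_{\ff,0+}$ for an Iwahori subgroup $G_{\ff,0}$. The strategy is to choose the Iwahori (equivalently the alcove $\ff$) so that its associated maximal split torus $S$ sits inside the chosen $\mc T$, arrange that $\mc T$ contains a maximal unramified torus $\mc T_\un$ with $S \subset \mc T_\un$, and then decompose $G_{\ff,0+}$ using the root-group/torus decomposition of the pro-unipotent radical of the Iwahori. Concretely, $G_{\ff,0+}$ is generated by $T_{0+}$ together with affine root subgroups $U_{a,\ff}$ for affine roots $a$ that are positive on the alcove. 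Since $\chi$ is trivial on $G_\Sc$, it is automatically trivial on every root subgroup (these lie in $G_\Sc$), so $\chi$ is trivial on all the $U_{a,\ff}$; and by hypothesis $(ii)$, $\chi|_T$ has depth zero, i.e.\ $\chi$ is trivial on $T_{0+}$. Hence $\chi$ is trivial on a generating set of $G_{\ff,0+}$, giving $(iv)$.

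Finally, $(iv) \Rightarrow (iii)$: once $\chi$ kills the pro-unipotent radical of one Iwahori, we spread this to all parahorics. The key point is that for any facet $\ff'$ the group $G_{\ff',0+}$ is generated by $G$-conjugates (indeed $G_{\ff',0}$- or even $Z(G)$-translates suffice, using that $\chi$ is a character of $G/G_\Sc$ hence conjugation-invariant) of pieces already controlled: more efficiently, since $\chi$ is $G$-conjugation invariant (being trivial on $G_\Sc$ and on $[G,G]$, as $\C^\times$ is abelian), and since any two Iwahori subgroups are $G$-conjugate, $\chi$ kills the pro-unipotent radical of \emph{every} Iwahori subgroup; then for a general parahoric $G_{\ff',0}$ one uses that its pro-unipotent radical $G_{\ff',0+}$ is generated by the $G_{\ff',0+} \cap J$ over Iwahori subgroups $J \subset G_{\ff',0}$, or alternatively that $G_{\ff',0+}$ is contained in the group generated by the pro-unipotent radicals of the Iwahoris it contains. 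Either way $\chi$ is trivial on $G_{\ff',0+}$, which is $(iii)$.

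\medskip

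The main obstacle I anticipate is the precise generation statement for $G_{\ff,0+}$ used in $(ii) \Rightarrow (iv)$ and its variant in $(iv) \Rightarrow (iii)$, in the generality of a possibly wildly ramified $\mc G$: one must be careful that the Moy--Prasad/Bruhat--Tits description of the pro-unipotent radical as (torus part)$\,\times\,$(product of affine root groups) still delivers a \emph{generating set} on which $\chi$ is visibly trivial, and that the torus part one lands in is exactly $T_{0+}$ for the chosen $\mc T$ and not some larger or incomparable subgroup. Handling this cleanly will likely require invoking the structure theory of parahoric group schemes (e.g.\ Bruhat--Tits, or the Moy--Prasad filtration results as recalled in the earlier sections of the paper) rather than an ad hoc computation, and keeping track of which maximal torus is ``visible'' in a given facet — this is exactly why the hypothesis in $(ii)$ is phrased in terms of a torus containing a maximal \emph{unramified} torus, and that hypothesis will be used essentially here.
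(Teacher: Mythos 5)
There is a genuine gap, and it sits exactly at the implication that carries the paper's main content, namely (ii) $\Rightarrow$ (iv). Your plan is to pick an Iwahori whose associated maximal split torus $S$ lies inside the given torus $\mc T$, so that $G_{C,0+}$ is generated by $T_{0+}$ together with affine root subgroups on which $\chi$ is visibly trivial. But the torus in (ii) is only assumed to contain a maximal unramified $F$-torus, and such tori can be anisotropic: already in $\SL_2$ the norm-one torus of the unramified quadratic extension is a maximal unramified torus containing no nontrivial split subtorus, so there is no Iwahori ``adapted'' to $\mc T$ in your sense, and your generation argument never gets off the ground in precisely the cases where (ii) has content. Even for the adapted torus $T' = Z_G(S')$, the statement that $G_{\ff,0+}$ is contained in $T'_{0+}\cdot \mathrm{Im}(G_\Sc \to G)$ is not a routine citation in the wildly ramified case; it is Proposition \ref{prop:1.3} of the paper, proved via the Moy--Prasad isomorphisms, a Lie-algebra computation over $F_\nr$, and a limit argument. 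The paper's actual route for (ii) is quite different: choose a finite \emph{unramified} $E/F$ splitting the maximal unramified torus $\mc T_\nr$, over which $\mc T_\nr$ becomes a maximal $E$-split torus so that $\mc T(E)$ does play the role of $T'$; then transfer the depth-zero hypothesis via the Langlands parameter, using that the base-changed character restricted to $\mc T(E)$ equals $\chi|_{\mc T(F)} \circ N_{E/F}$ (Proposition \ref{prop:1.8}), apply Proposition \ref{prop:1.3} and Theorem \ref{thm:1.4} over $E$, and descend using depth-zero preservation for tori (Proposition \ref{prop:1.7}) together with $\mb P_E = \mb P_F$. Nothing in your proposal plays the role of this base-change-and-descent step (or offers a substitute), so the implication from one, possibly elliptic, torus remains unproven.

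Two smaller points. In (iii) $\Rightarrow$ (i) you assert $T_{0+} = T \cap G_{\ff,0+}$ as ``standard compatibility, valid for arbitrary $\mc G$''; for wildly ramified maximal tori this is not standard, and only the inclusion $T_{0+} \subseteq G_{\ff',0+}$ is needed. The paper obtains it cleanly: $T^0$ fixes a facet by the Bruhat--Tits fixed point theorem, $T^0 \subseteq G^0 \cap G_\ff = G_{\ff,0}$ by functoriality of the Kottwitz homomorphism, and $T_{0+}$, being a pro-$p$ subgroup of $G_{\ff,0}$, lies in a maximal pro-$p$ subgroup, which is $G_{\ff',0+}$ for a chamber $\ff'$. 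Your step (iv) $\Rightarrow$ (iii) is essentially correct and matches the paper: conjugacy of chambers, normality of $\ker(\chi)$, and $G_{\ff,0+} \subseteq G_{C',0+}$ for a chamber $C'$ with $\ff \subseteq \overline{C'}$.
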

As in Proposition \ref{intro:prop:1.7}, we emphasize that Theorem \ref{thm:I} holds for any
connected reductive group over a non-archimedean local field, without any assumptions
on splitness or ramification. Of course
the wildly ramified groups present the biggest challenge in our proofs.

The results in this paper will be applied in our work on a local Langlands correspondence
for $G$-representations whose cuspidal supports are non-singular. 
First we do that in depth zero \cite{SoXu1}, then for representations which are tensor products 
of depth-zero representations with characters, and finally in arbitrary depths \cite{SoXu2}.

\section{Proofs of properties of the LLC for tori}

\begin{prop}\label{prop:1.8}
Let $E/F$ be a separable extension of local fields and let $\mc T$ be an
arbitrary $F$-torus. The following diagram commutes:
\[\begin{tikzcd}
    \Hom (\mc T (E), \C^\times) \arrow[]{r}{\sim} & H^1 (\mb W_E ,T^\vee) \\
\Hom (\mc T (F), \C^\times) \arrow[]{r}{\sim}\arrow[]{u}[swap]{N^*_{E/F}} & 
H^1 (\mb W_F, T^\vee)\arrow[]{u}{\Res^{\mb W_F}_{\mb W_E}}
\end{tikzcd}\]
In other words, $\varphi_\chi |_{\mb W_E} = \varphi_{\chi \circ N_{E/F}}$
for all smooth characters $\chi : \mc T (F) \to \C^\times$.
\end{prop}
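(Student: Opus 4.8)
The plan is to reduce the statement to the known functoriality of the LLC for tori (Theorem \ref{thm:1.6}) by recognizing the norm map $N_{E/F} : \mathcal{T}(E) \to \mathcal{T}(F)$ as arising from a morphism of $F$-tori, so that the compatibility becomes a special case of the functoriality already asserted. The key observation is that $N_{E/F}$ factors through Weil restriction of scalars: one has the counit $\mathcal{T}(E) = (\mathrm{Res}_{E/F} \mathcal{T}_E)(F)$, and the norm map is the map on $F$-points induced by the canonical $F$-morphism $\mathrm{Res}_{E/F}\mathcal{T}_E \to \mathcal{T}$. On the dual side, the dual group of $\mathrm{Res}_{E/F}\mathcal{T}_E$ is the induced module $\mathrm{Ind}_{\mathbf{W}_E}^{\mathbf{W}_F} T^\vee$, and under this identification the map dual to $\mathrm{Res}_{E/F}\mathcal{T}_E \to \mathcal{T}$ is the natural inclusion $T^\vee \to \mathrm{Ind}_{\mathbf{W}_E}^{\mathbf{W}_F} T^\vee$. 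Meanwhile Shapiro's lemma identifies $H^1(\mathbf{W}_F, \mathrm{Ind}_{\mathbf{W}_E}^{\mathbf{W}_F} T^\vee)$ with $H^1(\mathbf{W}_E, T^\vee)$.

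First I would set up the two commuting squares: the functoriality of the LLC applied to the morphism $q : \mathrm{Res}_{E/F}\mathcal{T}_E \to \mathcal{T}$ gives a square relating $\Hom(\mathcal{T}(F), \C^\times) \to \Hom((\mathrm{Res}_{E/F}\mathcal{T}_E)(F), \C^\times)$ with $H^1(\mathbf{W}_F, T^\vee) \to H^1(\mathbf{W}_F, \mathrm{Ind}_{\mathbf{W}_E}^{\mathbf{W}_F}T^\vee)$. Second, I would identify the left vertical map with $N^*_{E/F}$ using the factorization of the norm, and identify the right vertical map: under Shapiro's isomorphism $H^1(\mathbf{W}_F, \mathrm{Ind}_{\mathbf{W}_E}^{\mathbf{W}_F}T^\vee) \cong H^1(\mathbf{W}_E, T^\vee)$, the map induced by $T^\vee \hookrightarrow \mathrm{Ind}_{\mathbf{W}_E}^{\mathbf{W}_F}T^\vee$ should become exactly $\mathrm{Res}^{\mathbf{W}_F}_{\mathbf{W}_E}$. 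Third, I would invoke the LLC for the torus $\mathrm{Res}_{E/F}\mathcal{T}_E$ itself to identify $\Hom((\mathrm{Res}_{E/F}\mathcal{T}_E)(F), \C^\times) \cong \Hom(\mathcal{T}(E), \C^\times)$ compatibly with $H^1(\mathbf{W}_F, \mathrm{Ind}_{\mathbf{W}_E}^{\mathbf{W}_F}T^\vee) \cong H^1(\mathbf{W}_E, T^\vee)$ --- this is itself a functoriality/compatibility statement built into Theorem \ref{thm:1.6}, and pasting it onto the previous square yields the desired diagram.

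The main obstacle I anticipate is verifying that the Shapiro isomorphism on cohomology is compatible with the two incarnations of the LLC --- i.e., that the isomorphism $\Hom(\mathcal{T}(E),\C^\times) \cong H^1(\mathbf{W}_E, T^\vee)$ obtained "directly" agrees with the one obtained by going through $\mathrm{Res}_{E/F}\mathcal{T}_E$, Shapiro, and the dual-group identification $(\mathrm{Res}_{E/F}\mathcal{T}_E)^\vee = \mathrm{Ind}_{\mathbf{W}_E}^{\mathbf{W}_F} T^\vee$. This is presumably part of the package of Theorem \ref{thm:1.6} (it is the statement that the LLC is compatible with restriction of scalars), but pinning down the precise cocycle-level identifications --- matching the counit $\mathcal{T}(E) \cong (\mathrm{Res}_{E/F}\mathcal{T}_E)(F)$ on one side with Shapiro's lemma on the other, and checking the choice of coset representatives in the norm map matches the one in the induced module --- will require care. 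A secondary, more elementary check is that the dual of the norm morphism is genuinely the inclusion into the induced module; this follows from the description of morphisms of tori in terms of cocharacter lattices together with the standard identification of cocharacters of a Weil restriction.

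Alternatively, if the bookkeeping through Weil restriction becomes cumbersome, one can argue more hands-on: reduce via local class field theory to the case $\mathcal{T} = \mathbb{G}_m$ over $F'$ for a finite extension $F'/F$, where the statement is precisely the compatibility of Artin reciprocity with norms \cite[Proposition 5.4]{Neu} together with $\mathrm{Res}^{\mathbf{W}_F}_{\mathbf{W}_E}$; then bootstrap to a general torus $\mathcal{T}$ by choosing a splitting field, writing $\mathcal{T}$ via an exact sequence of induced tori, and using that the LLC for tori is compatible with the associated long exact (hypercohomology) sequences. The first approach is cleaner conceptually and is the one I would write up.
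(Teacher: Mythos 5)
Your route---factor $N_{E/F}$ through the norm morphism of $F$-tori $\mathrm{Res}_{E/F}\mc T_E \to \mc T$, dualize to the canonical $\mb W_F$-equivariant map $T^\vee \to \mathrm{Ind}_{\mb W_E}^{\mb W_F} T^\vee$, and use Shapiro's lemma to turn the induced map on $H^1(\mb W_F,-)$ into $\Res^{\mb W_F}_{\mb W_E}$---is conceptually sound, and it is genuinely different from what the paper does. But as written it has a gap exactly at the point you flag as the ``main obstacle'': the compatibility of the LLC with Weil restriction of scalars, i.e.\ that the composite $\Hom(\mc T(E),\C^\times)=\Hom\big((\mathrm{Res}_{E/F}\mc T_E)(F),\C^\times\big)\cong H^1\big(\mb W_F,\mathrm{Ind}_{\mb W_E}^{\mb W_F}T^\vee\big)\cong H^1(\mb W_E,T^\vee)$ agrees with the LLC for $\mc T$ over $E$. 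This is \emph{not} part of Theorem \ref{thm:1.6} as stated: the functoriality asserted there is for homomorphisms of tori over a fixed base field, whereas the Shapiro compatibility compares the correspondence over two different base fields. Granting your (correct) identifications of the norm morphism and of its dual, that compatibility is essentially equivalent to the proposition you are trying to prove, so assuming it ``as part of the package'' is close to circular; to make the argument complete you must either give a precise citation for it or verify it at the cocycle level. That verification is where the real work lies, and it is of the same nature as the computation the paper carries out: the paper unwinds Yu's construction through a finite Galois splitting field $K\supset E$ (coinvariants, class field theory for the split torus over $K$, corestriction) and checks by an explicit cocycle identity that $\mathrm{cor}_{K/E}$ composed with the twisted transfer map equals $\Res^{\mb W_F}_{\mb W_E}\circ\,\mathrm{cor}_{K/F}$; in your setup the analogous bookkeeping appears when matching the counit $\mc T(E)\cong(\mathrm{Res}_{E/F}\mc T_E)(F)$ with the Shapiro isomorphism and the chosen coset representatives.

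Two smaller points. First, the dual of the norm morphism is not the inclusion of the identity-coset summand of $\mathrm{Ind}_{\mb W_E}^{\mb W_F}T^\vee$ (that map is not $\mb W_F$-equivariant) but the unit $t\mapsto \sum_{\gamma}\gamma\otimes\gamma^{-1}t$; with that map the composite with Shapiro is indeed restriction, so this is only a matter of phrasing. Second, the proposition covers archimedean fields as well: the paper handles $\C/\R$ by a separate explicit computation with Langlands' parametrization of characters of real tori, and your approach would need the restriction-of-scalars compatibility over $\R$ too (or a similar direct computation). Your alternative d\'evissage through induced tori and long exact sequences faces the same issue in a different guise, since the compatibility of the LLC with those exact sequences and with $\mathrm{Res}_{K/F}\mathbb G_m$ is again the content that must be proved or precisely cited.
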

\begin{proof}
First we consider non-archimedean local fields. 
Let $K / F$ be a finite Galois extension such that $K \supset E$ and $\mc T$ splits
over $K$. The construction of the LLC for $\mc T (E)$ and $\mc T (F)$ in 
\cite[\S 7.7]{Yu} can be summarized with the following diagram:
\begin{equation}\label{eq:1.19}
\! \adjustbox{scale=0.85}{\begin{tikzcd}
    \Hom (\mc T(E),\C^\times) \arrow[]{r}{} & \Hom (\mc T (K), \C^\times)_{\mr{Gal}(K/E)}
\arrow[]{r}{} & H^1 (\mb W_K, T^\vee)_{\mb W_E / \mb W_K} \arrow[]{r}{} &
H^1 (\mb W_E, T^\vee) \\
\Hom (\mc T(F),\C^\times) \arrow[]{r}{} \arrow[]{u}{N^*_{E/F}} & \Hom (\mc T (K),
\C^\times)_{\mr{Gal}(K/F)}\arrow[]{r}{} \arrow[dotted]{u}{(i)} & 
H^1 (\mb W_K, T^\vee)_{\mb W_F / \mb W_K} \arrow[]{r}{} \arrow[dotted]{u}{(ii)} &
H^1 (\mb W_F, T^\vee) \arrow[dotted]{u}{(iii)}
\end{tikzcd}}
\end{equation}
Here all the horizontal arrows are isomorphisms and the dotted arrows are defined 
such that the diagram commutes. The upper-left horizontal arrow comes from
$\mc T (E) = \mc T(K)^{\mr{Gal}(K/E)}$, the middle horizontal arrows come from local 
class field theory, and the right horizontal arrows are induced by corestriction maps.

We now deduce formulas for the dotted arrows. The map (i) is best seen as
\begin{align*}
\Hom (\mc T (K), \C^\times)_{\mr{Gal}(K/E)} &
\to 
\Hom (\mc T (K), \C^\times)_{\mr{Gal}(K/E)},\\
\chi & 
\mapsto 
\prod\limits_{\gamma \in \mr{Gal}(K/F) / \mr{Gal}(K/E)} \chi \circ \gamma^{-1}.
\end{align*}
This map is well-defined because we start with Gal$(K/E)$-coinvariants, and it factors
through $\Hom (\mc T (K), \C^\times)_{\mr{Gal}(K/F)}$. Let $\bar \gamma \in \mb W_F$ be 
any representative for $\gamma \in \mb W_F / \mb W_E$.
Similar to (i), the arrow (ii) in \eqref{eq:1.19} can be realized as the map
\begin{align}\label{eq:1.17}
\begin{split}
H^1 (\mb W_K ,T^\vee)_{\mb W_E / \mb W_K} & 
\to 
H^1 (\mb W_K, T^\vee)_{\mb W_E / \mb W_K}, \\
z & 
\mapsto 
\big[ w \mapsto \prod\nolimits_{\gamma \in \mb W_F / \mb W_E}
\bar \gamma \cdot z (\bar{\gamma}^{-1} w \bar \gamma) \big],
\end{split}
\end{align}
which is well-defined and factors through
$H^1 (\mb W_K ,T^\vee)_{\mb W_F / \mb W_K}$. We now study the right-hand square 
in the diagram \eqref{eq:1.19}, with (ii) replaced by \eqref{eq:1.17}:
\[\begin{tikzcd}
    H^1 (\mb W_K, T^\vee)_{\mb W_E / \mb W_K} \arrow[]{rr}{\mr{cor}_{K/E}} & & 
H^1 (\mb W_E, T^\vee) \\
H^1 (\mb W_K, T^\vee)_{\mb W_E / \mb W_K} \arrow[]{rr}{\mr{cor}_{K/F}} 
\arrow[]{u}{\eqref{eq:1.17}} & & H^1 (\mb W_F, T^\vee) \arrow[dotted]{u}{(iii)}
\end{tikzcd}\]
An explicit formula for the corestriction map 
$\mr{cor}_{K/F} : H^1 (\mb W_K,T^\vee) \to H^1 (\mb W_F, T^\vee)$ 
is given in \cite[\S 1.5]{NSW}. Let $\bar \gamma \in \mb W_F$ be a representative
for $\gamma \in \mb W_F / \mb W_K$, and let $z \in H^1 (\mb W_K, T^\vee)$, then
for $w \in \mb W_F$, we have
\begin{equation}\label{eq:1.18}
\begin{aligned}
\mr{cor}_{K/F}(z) (w) 
= \prod_{\substack{\gamma_1, \gamma_2 \in \mb W_F / \mb W_K : \\ 
w \in \gamma_1 \mb W_K \gamma_2^{-1}}} \bar{\gamma_1} \cdot z (\bar{\gamma_1}^{-1} w
\bar{\gamma_2})  
= \prod_{\substack{\gamma_3, \gamma_4 \in \mb W_F / \mb W_K : \\ 
w \gamma_4 \in \mb W_K}} \bar{\gamma_3} \cdot z (\bar{\gamma_3}^{-1} w
\overline{\gamma_4 \gamma_3}) .
\end{aligned}
\end{equation}
The third expression in \eqref{eq:1.18} is equal to the second because $\mb W_K$ 
is normal in $\mb W_F$. Note that $\gamma_4$ is uniquely determined by $w$, while
there is no condition on $\gamma_3 \in \mb W_F / \mb W_K$.
For $z \in H^1 (\mb W_K, T^\vee)$, we compute
\[
\mr{cor}_{K/E} \circ \eqref{eq:1.17} (z) (w) = 
\prod_{\substack{\gamma_5,\gamma_6 \in \mb W_E / \mb W_K \\ 
w \gamma_6 \in \mb W_K}}
\prod_{\gamma \in \mb W_F / \mb W_E} \bar{\gamma_5} \bar \gamma z \cdot
(\bar \gamma^{-1} \bar{\gamma_5}^{-1} w \overline{\gamma_6 \gamma_5} \bar \gamma) .
\]
We can use the elements $\bar{\gamma_5} \bar \gamma$ as representatives for
$\mb W_F / \mb W_K$, and then we see that
\[
\mr{cor}_{K/E} \circ \eqref{eq:1.17} (z)
= \Res^{\mb W_F}_{\mb W_E} \circ \mr{cor}_{K/F} (z) .
\]
Hence the diagram \eqref{eq:1.19} commutes, with 
$\Res^{\mb W_F}_{\mb W_E}$ as the map (iii). 

Now we consider archimedean local fields. There is only one nontrivial field
extension, namely $\C / \R$. We use the notations from \cite[\S 9.4]{Bor}.
Recall that $\mb W_\C = \C^\times$, and that 
$\mb W_\R = \C^\times \cup \tau \C^\times$ 
with 
$\tau^2 = -1$ 
and  
$\tau z \tau^{-1} = \bar z$ 
for  
$z \in \C^\times$.
We denote the action of the nontrivial element of $\mathrm{Gal}(\C/\R)$ on $X_* (\mc T)$
and on $X^* (\mc T)$ by $\sigma$. If the standard complex conjugation on
$X_* (\mc T) \otimes_\Z \C$ is written as $x \mapsto \bar x$, then the action of $\mathrm{Gal}(\C/\R)$ on $\mc T (\C)$ is given by $x \mapsto \sigma (\bar x)$ on $\mathrm{Lie}(\mc T(\C)) = X_* (\mc T) \otimes_\Z \C$. 

Let $\chi \in \Hom (\mc T (\R),\C^\times)$ have Langlands parameter $\varphi_\chi \in 
H^1 (\mb W_\R,T^\vee)$. There are $\mu,\nu \in X^* (\mc T) \otimes_\Z \C$ such that 
$\mu - \nu \in X^* (\mc T)$, $\nu = \sigma (\mu)$ and 
\begin{equation}\label{eq:1.20}
\varphi_\chi (z) = z^\mu \, \bar z^\nu \text{ for } z \in \C^\times .
\end{equation}
Replacing $\varphi_\chi$ by a $T^\vee$-conjugate, we may assume that $\varphi_\chi (\tau)$
is fixed by $\sigma$. Pick $h \in X^* (\mc T) \otimes_\Z \C$ such that 
$\varphi_\chi (\tau) = \exp (2\pi i h)$. By \cite[p. 117]{Lan1}, for $x \in X_* (\mc T)
\otimes_\Z \R$ with $\exp (x) \in \mc T (\R)$ we have\footnote{The corresponding formula in 
\cite[\S 9.4]{Bor} contains a typo, one $x$ should be $\bar x$.}
\[
\chi (\exp (x)) = \exp \big( \langle h, x - \sigma (\bar x) \rangle +
\langle \mu / 2, x + \sigma (\bar x) \rangle \big).
\]
For any $y \in X_* (\mc T) \otimes_\Z \C$, we compute
\begin{align*}
\chi \circ N_{\C/\R} (\exp (y)) & = \chi \big( \exp (y + \sigma (\bar y)) \big) \\
& = \exp \big( \langle h, y - \sigma (\bar y) + \sigma (\bar y) - y \rangle +
\langle \mu/2, y + \sigma (\bar y) + \sigma (\bar y) + y \rangle \big) \\
& = \exp \big( \langle \mu/2, 2y \rangle + \langle \mu/2, \sigma (2 \bar y) \rangle \big) \\
& = \exp \big( \langle \mu, y \rangle + \langle \nu, \bar y \rangle \big).
\end{align*}
Comparing the last expression with \eqref{eq:1.20} and \cite[\S 9.1]{Bor}, we see 
that $\phi_{\chi \circ N_{\C / \R}}$ is equal to $\phi_\chi |_{\mb W_\C}$.
\end{proof}

\begin{prop}\label{prop:1.7}
Let $\mc T$ be a (not necessarily tamely ramified) torus over a
non-archimedean local field $F$. The local Langlands correspondence for tori restricts to 
an isomorphism between the depth-zero parts on both sides of the correspondence:
\[
\Hom (T / T_{0+}, \C^\times) \cong H^1 \big( \mb W_F / \mb P_F, T^{\vee,\mb P_F} \big).
\]
\end{prop}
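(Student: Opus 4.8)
The plan is to reduce the statement to a comparison of filtration subgroups on the two sides, using the norm functoriality just established in Proposition~\ref{prop:1.8} together with the tame case of the depth-preservation result (i.e.\ \cite[Theorem 7.10]{Yu}). The key observation is that ``depth zero'' on the Galois side is controlled by the maximal tamely ramified subextension, while on the torus side it is controlled by the same field through the structure of the parahoric subgroup $T^0$ and its pro-$p$ radical $T_{0+}$.

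First I would set up notation: let $F_t/F$ be the maximal tamely ramified subextension inside a splitting field $K$ of $\mc T$ over $F$, so that $\mr{Gal}(K/F_t)$ is a $p$-group and $F_t$ is the fixed field of $\mb P_F$ acting through $\mr{Gal}(K/F)$; equivalently $\mb W_{F_t} = \mb W_F \cdot \mb P_F$-related, and $\mb W_{F_t}/\mb P_F$ has the same image in $\mr{Gal}(K/F)$ as $\mb W_F/\mb P_F$ does. Consider the base-changed torus $\mc T_{F_t} = \mc T \times_F F_t$, which is a torus over $F_t$ that \emph{splits over the tamely ramified extension} $K/F_t$. By \cite[Theorem 7.10]{Yu} applied to $\mc T_{F_t}$, the LLC for $\mc T(F_t)$ preserves depth; in particular it identifies $\Hom(\mc T(F_t)/(\mc T(F_t))_{0+}, \C^\times)$ with $H^1(\mb W_{F_t}/\mb P_{F_t}, T^{\vee, \mb P_{F_t}})$. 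Note $\mb P_{F_t} = \mb P_F$ since passing to a tamely ramified extension does not change the wild inertia.

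Next I would produce the two vertical reductions. On the Galois side, inflation-restriction for the normal $p$-subgroup $\mr{Gal}(K/F_t) \triangleleft \mr{Gal}(K/F)$ (or rather for $\mb P_F \cdot \mb W_K$ inside $\mb W_F$) should give that restriction $H^1(\mb W_F/\mb P_F, T^{\vee,\mb P_F}) \to H^1(\mb W_{F_t}/\mb P_F, T^{\vee,\mb P_F})$ is injective with image the $\mr{Gal}(F_t/F)$-invariants, because the relevant obstruction and kernel groups are cohomology of a $p$-group with coefficients in a uniquely $p$-divisible-up-to-torsion module --- here one uses that $T^{\vee,\mb P_F}$ is a complex torus, hence a divisible group, so $H^i$ of a finite $p$-group with such coefficients is controlled, and in fact the inflation map is an isomorphism onto invariants. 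On the torus side, I would use norm functoriality: by Proposition~\ref{prop:1.8}, $N^*_{F_t/F}$ corresponds under LLC to $\Res^{\mb W_F}_{\mb W_{F_t}}$, so it suffices to show $N^*_{F_t/F}$ identifies $\Hom(\mc T(F)/\mc T(F)_{0+},\C^\times)$ with the $\mr{Gal}(F_t/F)$-fixed characters inside $\Hom(\mc T(F_t)/\mc T(F_t)_{0+},\C^\times)$. This is where the real work lies: I must show that the norm map $N_{F_t/F}: \mc T(F_t) \to \mc T(F)$ sends $\mc T(F_t)_{0+}$ into $\mc T(F)_{0+}$ and induces a map on depth-zero quotients whose dual is injective with the expected image --- concretely, that $\mc T(F)/\mc T(F)_{0+} \to \big(\mc T(F_t)/\mc T(F_t)_{0+}\big)$ is, after restricting to a suitable finite-index piece, an isomorphism up to prime-to-$p$ issues which are invisible to $\C^\times$-valued characters.

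The main obstacle I anticipate is precisely the behavior of parahoric subgroups and their pro-$p$ radicals under the tamely ramified base change $F_t/F$ and the norm map. One must understand $T^0$ and $T_{0+}$ via Bruhat--Tits theory for tori: $T^0$ is the connected N\'eron model's integer points and $T/T^0 \cong X_*(\mc T)_{\mb I_F}^{\Fr}$-type object, while $T_{0+}$ is its pro-$p$ part, and I will need that the transition map on these objects under $F \rightsquigarrow F_t$ has kernel and cokernel of prime-to-$p$ order (coming from $[F_t:F]$ being prime to $p$ and the tame/wild dichotomy for $X_*(\mc T)_{\mb I}$). I would handle this by first treating induced tori $\mc T = \Res_{E/F}\mathbb{G}_m$, where everything is explicit via $E^\times$ and the norm, then passing to general tori using a resolution $1 \to \mc T_2 \to \mc T_1 \to \mc T \to 1$ with $\mc T_1, \mc T_2$ induced (or quasi-trivial), checking that depth-zero characters and the $H^1$-groups behave well in the resulting long exact sequences --- the divisibility of the complex dual tori again ensuring the connecting maps vanish where needed. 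Finally, combining the two vertical identifications with the horizontal (tame) one of \cite[Theorem 7.10]{Yu} for $\mc T_{F_t}$ yields the desired isomorphism over $F$.
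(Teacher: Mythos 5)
Your proposal founders at its very first reduction. You take $F_t$ to be the maximal tamely ramified subextension of a splitting field $K/F$, so that $\mathrm{Gal}(K/F_t)$ is the image of the wild inertia $\mb P_F$, a $p$-group. But then $K/F_t$ is the \emph{wildly} ramified part of the tower, not a tamely ramified extension: tame ramification sits in the quotient $\mathrm{Gal}(F_t/F)$, wild ramification in the subgroup $\mathrm{Gal}(K/F_t)$. Since, as you yourself note, $\mb P_{F_t}=\mb P_F$, the wild inertia still acts nontrivially on $X^*(\mc T)$ after base change, so $\mc T_{F_t}$ is exactly as wildly ramified over $F_t$ as $\mc T$ is over $F$. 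Consequently \cite[Theorem 7.10]{Yu} does not apply to $\mc T_{F_t}$ -- indeed no tamely ramified base change can ever make a wild torus tame, because passing to a tame extension never shrinks the wild inertia. The horizontal identification over $F_t$ that your whole argument pivots on is therefore precisely the open (wild) case the proposition is meant to settle, and the reduction collapses. A secondary problem: your descent step invokes ``$[F_t:F]$ prime to $p$'', which is also false in general, since a tamely ramified extension may have residue degree divisible by $p$; and the asserted behavior of $T^0$, $T_{0+}$ and the norm on depth-zero quotients ``up to prime-to-$p$ issues'' is exactly where wild phenomena live and is left unproved.

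For contrast, the paper's proof avoids any base change or appeal to the tame depth-preservation theorem. It starts from the known identification of weakly unramified characters, $\Hom(T/T^0,\C^\times)\cong H^1(\mb W_F/\mb I_F,T^{\vee,\mb I_F})$, passes to the quotients to get a topological isomorphism $\Hom(T^0,\C^\times)\cong H^1_e(\mb I_F,T^\vee)$ (classes extendable to $\mb W_F$), and then characterizes the depth-zero part on each side intrinsically as the largest subgroup with no nontrivial pro-$p$ quotient: on the torus side because $T_{0+}$ is the maximal pro-$p$ subgroup of the compact commutative group $T^0$ (Pontryagin duality), and on the Galois side because $\mb I_F/\mb P_F$ has no nontrivial pro-$p$ subgroups while any class nontrivial on $\mb P_F$ has $p$-power order after restriction to a suitable finite $p$-quotient of $\mb P_F$. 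Since this characterization is purely in terms of the topological group structure, the isomorphism must match the two depth-zero subgroups. If you want to salvage your strategy, you would need an argument of this intrinsic kind rather than a reduction to the tame case, which is structurally unavailable here.
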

\begin{proof}
The right-hand side of \eqref{eq:1.15} is the kernel of the restriction map 
$H^1 (\mb W_F, T^\vee) \to H^1 (\mb I_F, T^\vee)$. Hence \eqref{eq:1.15} and the LLC for tori \eqref{eqn:LLC-for-tori}
induce the following isomorphisms of topological groups
\begin{equation}\label{eq:1.16}
\Hom (T^0 ,\C^\times) \cong \frac{\Hom (T,\C^\times)}{\Hom (T / T^0, \C^\times)}
\cong \frac{H^1 (\mb W_F, T^\vee)}{H^1 (\mb W_F / \mb I_F, T^{\vee,\mb I_F})} \cong
H^1_e (\mb I_F, T^\vee) .
\end{equation}
Here and below, the subscript e indicates that we only consider those classes that
can be extended to $H^1 (\mb W_F,T^\vee)$.  

Since $T^0$ is commutative, its pro-$p$ radical $T_{0+}$ is the unique maximal 
pro-$p$ subgroup of $T^0$. By Pontryagin duality, $\Hom (T_{0+},\C^\times)$
is the unique maximal pro-$p$ quotient of $\Hom (T^0,\C^\times)$.~In other words,
$\Hom (T^0 / T_{0+},\C^\times)$ is the largest subgroup of $\Hom (T^0 , \C^\times)$
that has no nontrivial pro-$p$ quotient groups. 

Recall from \cite[Proposition 7.5.2]{NSW} that $\mb I_F / \mb P_F$ is isomorphic to 
the quotient of the profinite completion of $\Z$ by the $p$-adic integers. This group 
is profinite and has no nontrivial
pro-$p$ subgroups, so every element of $H^1 (\mb I_F / \mb P_F, T^{\vee,\mb P_F})$
has order coprime to $p$. Therefore $H^1 \big( \mb I_F / \mb P_F, T^{\vee,\mb P_F} \big)$
has no nontrivial pro-$p$ quotient groups. Consider an element $x \in H^1 (\mb I_F,T^\vee)
\setminus H^1 \big( \mb I_F / \mb P_F, T^{\vee,\mb P_F} \big)$. By the continuity of $x$,
there exists an open subgroup $\mb K_F \subset \mb P_F$ that fixes $T^\vee$ pointwise, 
such that $x |_{\mb K_F} = 1$. Then $\mb P_F / \mb K_F$ is a finite $p$-group and
the image of $x$ in $H^1 (\mb P_F / \mb K_F ,T^\vee)$ is nontrivial, so its order is
a power of $p$. Thus the group generated by $H^1 \big( \mb I_F / \mb P_F, 
T^{\vee,\mb P_F} \big) \cup \{x\}$ does have a nontrivial pro-$p$ quotient group.

We conclude that the isomorphism \eqref{eq:1.16} sends $\Hom (T^0 / T_{0+},\C^\times)$
bijectively to the largest subgroup of $H^1_e (\mb I_F, T^\vee)$ that has no nontrivial
pro-$p$ quotients, namely $H^1_e \big( \mb I_F / \mb P_F, T^{\vee,\mb P_F} \big)$. Now we take
the images of these groups in the two middle terms of \eqref{eq:1.16}, which produces
an isomorphism
\[
\frac{\Hom (T / T_{0+},\C^\times)}{\Hom (T / T^0, \C^\times)} \cong \frac{H^1 (\mb W_F 
/ \mb P_F, T^{\vee,\mb P_F})}{H^1 (\mb W_F / \mb I_F, T^{\vee,\mb I_F})} .
\]
To conclude, we combine this with \eqref{eq:1.15}.
\end{proof}

\section{Depth-zero characters of reductive $p$-adic groups}

Let $\mc G$ be a connected reductive group defined over a local field $F$.
We are interested in the smooth characters of $G = \mc G (F)$.
Let $\mc G_\Sc$ be the simply connected cover of the derived subgroup $\mc G_\der$ of $\mc G$. 
The quotient map $\mc G_\Sc \to \mc G_\der$ sends $G_\Sc$ to a normal subgroup of $G$, which is in general smaller than $G_\der$. The cokernel of the canonical map $G_\Sc \to G$
will be abbreviated as $G / G_\Sc$ (even though $G_\Sc$ is usually not a subgroup of $G$).
For most reductive groups, $G_\Sc$ is generated by commutators, so it automatically
lies in the kernel of any character. This may fail, however, when $G_\der$ is anisotropic, which 
is why we explicitly require that our characters of $G$ are trivial on the image of $G_\Sc$.

In \cite[p.~124--125]{Lan1}, Langlands constructed a natural map from
$H^1 (\mb W_F,Z(G^\vee))$ to smooth characters of $G / G_\Sc$.
Since we could not find a proof of the bijectivity of this map in the 
literature, we provide it below.

\begin{thm}\label{thm:1.1}
Let $\mc G$ be a connected reductive group over a local field $F$.
There exists a natural isomorphism of topological groups
\[
H^1 (\mb W_F,Z(G^\vee)) \isom \Hom (G / G_\Sc,\C^\times) :\; \varphi \mapsto \chi_\varphi
\]
\end{thm}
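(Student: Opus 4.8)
The plan is to reduce the statement for a general connected reductive $\mc G$ to the known case of tori, namely Theorem \ref{thm:1.6}, by choosing a suitable $F$-torus whose dual records $Z(G^\vee)$. Concretely, let $\mc D$ be the maximal central $F$-torus quotient of $\mc G$, that is, $\mc D = \mc G / \mc G_\der$; then the dual torus $D^\vee$ is precisely the identity component $Z(G^\vee)^\circ$ of the centre of $G^\vee$, and one has a natural $\mb W_F$-equivariant inclusion $D^\vee \hookrightarrow Z(G^\vee)$. Dually, the quotient map $\mc G \to \mc D$ induces $D = \mc D(F) \to$ (characters of $\mc D(F)$ pull back to characters of $G$ trivial on $G_\der \supseteq G_\Sc$). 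The first step is therefore to set up the commutative square relating $H^1(\mb W_F, D^\vee) \to H^1(\mb W_F, Z(G^\vee))$ on the Galois side with the pullback map on characters, so that Langlands' construction on $H^1(\mb W_F, Z(G^\vee))$ is seen to extend the LLC for the torus $\mc D$. This identifies the source of the difficulty: $Z(G^\vee)$ need not be connected, and $G/G_\Sc$ need not equal $D = \mc D(F)$, so one cannot simply invoke Theorem \ref{thm:1.6} for $\mc D$.

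The second step is to handle the component group $\pi_0(Z(G^\vee))$. One way to organize this: $Z(G^\vee)$ sits in an exact sequence $1 \to Z(G^\vee)^\circ \to Z(G^\vee) \to \pi_0(Z(G^\vee)) \to 1$ of $\mb W_F$-modules, with $\pi_0(Z(G^\vee))$ a finite abelian group; this is Langlands-dual to an isogeny of diagonalizable groups, or equivalently to an exact sequence of (co)character lattices. On the group side, the relevant exact sequence is the one expressing $G/G_\Sc$ in terms of $D = \mc G/\mc G_\der$ applied to $F$-points — the failure of $\mc G_\Sc(F) \to \mc G_\der(F)$ to be surjective, together with the failure of $\mc G(F) \to \mc D(F)$ to be surjective, both being controlled by Galois cohomology ($H^1(F, \mu)$-type groups and $H^1(F, \mc G_\Sc)$, which vanishes when $\mc G_\Sc$ is quasi-split simply connected, but in general over $p$-adic $F$ it does vanish by Kneser's theorem; over archimedean $F$ more care is needed). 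The plan is to show both sides fit into compatible short exact sequences with the torus case $H^1(\mb W_F, D^\vee) \cong \Hom(D, \C^\times)$ in the middle, and then conclude by the five lemma / a diagram chase that the outer and hence the total map is an isomorphism.

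Alternatively — and this may be cleaner — one can argue by a direct dévissage on $\mc G$ itself: pick a $z$-extension $1 \to \mc Z \to \widetilde{\mc G} \to \mc G \to 1$ with $\widetilde{\mc G}_\der$ simply connected and $\mc Z$ an induced torus. Then $\widetilde G / \widetilde G_\Sc = \widetilde{\mc G}(F)/\widetilde{\mc G}_\der(F) = \widetilde{\mc D}(F)$ is literally the $F$-points of a torus (since $H^1(F,\widetilde{\mc G}_\der) $ is trivial for simply connected groups over $p$-adic fields, and the relevant connecting maps vanish), so Theorem \ref{thm:1.6} applies directly to $\widetilde{\mc G}$. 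On the dual side, $Z(\widetilde G^\vee) = \widetilde D^\vee$ is a torus, and the exact sequence $1 \to \mc Z \to \widetilde{\mc G} \to \mc G \to 1$ dualizes to relate $Z(G^\vee)$, $Z(\widetilde G^\vee)$ and $Z^\vee$. One then descends from $\widetilde{\mc G}$ to $\mc G$ on both sides and checks naturality of Langlands' map with respect to $\widetilde{\mc G} \to \mc G$; a short diagram chase using that $\mc Z$ is induced (so its cohomology is transparent and $H^1(\mb W_F, Z^\vee)$ computes via Shapiro) yields the isomorphism for $\mc G$.

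The main obstacle I expect is \textbf{not} the torus input but the bookkeeping of the non-connectedness of $Z(G^\vee)$ together with the non-surjectivity of $\mc G_\Sc(F)\to\mc G_\der(F)$ and $\mc G(F)\to\mc D(F)$: one must verify that the two short exact sequences — one of $\mb W_F$-modules involving $\pi_0(Z(G^\vee))$, one of locally compact groups involving the cokernel of $G_\der / \mathrm{im}(G_\Sc)$ inside $D$ — are exchanged correctly by Langlands' construction, and that this construction is genuinely \emph{natural} (functorial in $\mc G$) so that the $z$-extension argument or the five-lemma argument is legitimate. A secondary point requiring care is the \textbf{topological} part of the claim: one must check that the constructed bijection is a homeomorphism, i.e.\ that Langlands' map is continuous with continuous inverse; this should follow from the corresponding fact for tori in Theorem \ref{thm:1.6} plus the fact that all the intervening maps (restriction, inflation, the finite-index bookkeeping maps) are continuous and open, but it should be stated explicitly. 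The archimedean case, if it is to be included uniformly, needs the analogue of the computation at the end of the proof of Proposition \ref{prop:1.8}, or can be cited from \cite{Lan1, Bor}.
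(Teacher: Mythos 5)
Your preferred second route is essentially the paper's own proof (following Borel's write-up of Langlands' construction): choose an extension $1 \to \mc D \to \tilde{\mc G} \to \mc G \to 1$ with $\tilde{\mc G}_\der$ simply connected and $H^1(F,\mc D)=\{1\}$, identify $\Hom (G/G_\Sc,\C^\times)$ with the characters of the torus $(\tilde{\mc G}/\mc G_\Sc)(F)$ that are trivial on $\mc D (F)$, apply Theorem \ref{thm:1.6} there, and descend via the long exact sequence attached to $1 \to Z(G^\vee) \to Z(\tilde{\mc G}^\vee) \to \mc D^\vee \to 1$. The step you defer to ``a short diagram chase'' is precisely where the paper works: bijectivity needs the connecting map $(\mc D^\vee)^{\mb W_F} \to H^1(\mb W_F, Z(G^\vee))$ to vanish, i.e. surjectivity of $Z(\tilde{\mc G}^\vee)^{\mb W_F} \to (\mc D^\vee)^{\mb W_F}$, which the paper deduces from the connectedness of $(\mc D^\vee)^{\mb W_F}$ (Kottwitz isomorphism plus $H^1(F,\mc D)=1$) together with an exponential/Lie-algebra argument --- for your induced $\mc Z$ that connectedness is immediate, so your plan goes through.
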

\begin{proof}
Langlands' construction of this natural homomorphism is presented in \cite[\S 10.2]{Bor},
which we follow. One chooses an $F$-torus $\mc D$ 
satisfying $H^1 (F,\mc D) = \{1\}$ and an extension of $F$-groups
\begin{equation}\label{eq:1.2}
1 \to \mc D \to \tilde{\mc G} \to \mc G \to 1,     
\end{equation}
such that $\mc G_\Sc \to \mc G$ lifts to an embedding $\mc G_\Sc \to \tilde{\mc G}$. 
By \eqref{eq:1.2} and the condition $H^1 (\mb W_F,\mc D) = \{1\}$, we have a group 
isomorphism $\mc G (F) \cong \tilde{\mc G} (F) / \mc D (F)$. Hence
\begin{equation}\label{eq:1.3}
\Hom (G / G_\Sc,\C^\times) =
\big\{ \chi \in \Hom (\tilde{\mc G} (F) / \mc G_\Sc (F),\C^\times) : 
\mc D (F) \subset \ker \chi \big\} .
\end{equation}
The group $\tilde{\mc G} / \mc G_\Sc$ is an $F$-torus. By \cite[Proposition 7]{BrTi1}
(for any non-archimedean $F$) and by \cite[\S 6]{Kot} (for any $F$ of characteristic 
zero), $H^1 (F,\mc G_\Sc)$ is trivial. Hence there is a short exact sequence
\[
1 \to \mc G_\Sc (F) \to \tilde{\mc G}(F) \to (\tilde{\mc G}/\mc G_\Sc)(F) \to 1 .
\]
Thus the LLC for tori (Theorem \ref{thm:1.6}) provides natural isomorphisms 
\begin{equation}\label{eq:1.4}
\Hom ( \tilde{\mc G} (F) / \mc G_\Sc (F), \C^\times) \cong 
\Hom ( (\tilde{\mc G}/\mc G_\Sc)(F), \C^\times) \cong 
H^1 ( \mb W_F,  (\tilde{\mc G}/\mc G_\Sc)^\vee ) .
\end{equation}
By \eqref{eq:1.4} and functoriality in Theorem \ref{thm:1.6}, \eqref{eq:1.3} is
naturally isomorphic to
\begin{equation}\label{eq:1.5}
\ker \big( H^1 ( \mb W_F,  (\tilde{\mc G}/\mc G_\Sc)^\vee ) \to
H^1 (\mb W_F, \mc D^\vee) \big) .
\end{equation}
Now the map in the statement is obtained from the natural map from $H^1 (\mb W_F, Z(G^\vee))$ to
\eqref{eq:1.5} followed by \eqref{eq:1.3}. Since the derived group of $\tilde{\mc G}$
is simply connected, $Z(\tilde{\mc G}^\vee)$ is connected and is equal to 
$(\tilde{\mc G} / \mc G_\Sc)^\vee$. As noted in \cite[10.2.(2)]{Bor}, there is 
a short exact sequence  of $\mb W_F$-groups 
\[
1 \to Z(\mc G^\vee) \to Z(\tilde{\mc G}^\vee) \to \mc D^\vee \to 1 , 
\]
which gives rise to a long exact sequence in group cohomology
\begin{equation}\label{eq:1.6}
\begin{array}{ccccccc}
Z(\mc G^\vee)^{\mb W_F} & \to & Z(\tilde{\mc G}^\vee)^{\mb W_F} & \to & (\mc D^\vee)^{\mb W_F} & \to \\
H^1 (\mb W_F, Z(\mc G^\vee)) & \to & H^1 (\mb W_F, Z(\tilde{\mc G}^\vee)) & \to & 
H^1 (\mb W_F, \mc D^\vee) .
\end{array}
\end{equation}
The Kottwitz isomorphism (see \cite[\S 6]{Kot} and \cite[Theorem 2.1]{Tha}) shows that
\[
\pi_0 \big( (\mc D^\vee)^{\mb W_F} \big) \cong \Hom (H^1 (F,\mc D), \C^\times) = \{1\} ,
\]
thus $(\mc D^\vee)^{\mb W_F}$ is connected.~Then $(\mc D^\vee)^{\mb W_F}$ is equal to $\exp \big( \mr{Lie}(\mc D^\vee)^{\mb W_F} \big)$, onto which $\exp \big( \mr{Lie} (Z(\tilde{\mc G}^\vee))^{\mb W_F} \big)$ surjects.~Hence the map $Z(\tilde{\mc G}^\vee)^{\mb W_F} \to (\mc D^\vee)^{\mb W_F}$ in \eqref{eq:1.6} is surjective.~By the exactness of \eqref{eq:1.6}, the natural map from
$H^1 (\mb W_F, Z(\mc G^\vee))$ to \eqref{eq:1.5} is bijective. Since all the maps in the construction are continuous, so is the final isomorphism (in both directions).
\end{proof}

From now on $F$ is a non-archimedean local field, and our main object of study is the group of characters 
\[
\Xo (G) := \{ \chi : G / G_\Sc \to \C^\times \mid \chi |_T \text{ has depth
zero for all maximal tori } T \subset G \} .
\]
We define $\Xo (G^\vee) \subset H^1 (\mb W_F, Z(G^\vee)$ as the image of $\Xo (G)$ 
under Theorem \ref{thm:1.1}. Let us present two more explicit descriptions of $\Xo (G^\vee)$.

\begin{lem}\label{lem:1.2}
\enuma{
\item $\Xo (G^\vee)$ is equal to the set of $\varphi \in H^1 (\mb W_F, Z(G^\vee))$ such that, 
for all maximal tori $T \subset G$, $\varphi$ has depth zero in $H^1 (\mb W_F ,T^\vee)$.
\item $\Xo (G^\vee)$ contains $H^1 \big( \mb W_F / \mb P_F, Z(G^\vee)^{\mb P_F} \big)$.
\item If $\mc G$ splits over a tamely ramified extension of $F$, then
\[
\Xo (G^\vee) = H^1 \big( \mb W_F / \mb P_F, Z(G^\vee)^{\mb P_F} \big).
\]
}
\end{lem}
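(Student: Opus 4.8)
The plan is to prove the three parts in order, exploiting the LLC for tori and Proposition \ref{prop:1.7} as a black box applied to each maximal torus $T \subset G$.

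First I would establish part (a). The key point is functoriality of the LLC for tori with respect to the closed immersion of $F$-tori $\mc Z(\mc G)^\circ \hookrightarrow \mc T$ (equivalently, working on the dual side with $T^\vee \twoheadrightarrow Z(G^\vee)$ — note $Z(G^\vee)$ is the dual of the cocenter $\mc G / \mc G_\der$, which receives $\mc T$). Concretely, a character $\varphi \in H^1(\mb W_F, Z(G^\vee))$ produces, for each maximal torus $T$, a parameter in $H^1(\mb W_F, T^\vee)$ by pushing forward along $Z(G^\vee) \to T^\vee$, and the corresponding character of $T$ is $\chi_\varphi|_T$ by the compatibility built into the construction in Theorem \ref{thm:1.1} (the construction goes through a $z$-extension $\tilde{\mc G}$ and the torus $\tilde{\mc G}/\mc G_\Sc$; restricting to $T$ amounts to the analogous construction for $T$ itself). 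Since the LLC for tori preserves depth zero in the strong sense of Proposition \ref{intro:prop:1.7}, $\chi_\varphi|_T$ has depth zero if and only if its parameter, namely the image of $\varphi$ in $H^1(\mb W_F, T^\vee)$, has depth zero. Intersecting over all maximal tori $T$ gives exactly the asserted description of $\Xo(G^\vee)$.

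Next, for part (b): if $\varphi \in H^1(\mb W_F/\mb P_F, Z(G^\vee)^{\mb P_F})$, then its image in $H^1(\mb W_F, T^\vee)$ under the fixed map $Z(G^\vee) \to T^\vee$ is represented by a cocycle killed on $\mb P_F$, since $Z(G^\vee)^{\mb P_F}$ lands in $(T^\vee)^{\mb P_F}$ and the cocycle condition is preserved. Hence that image has depth zero for every $T$, and by part (a), $\varphi \in \Xo(G^\vee)$. So $H^1(\mb W_F/\mb P_F, Z(G^\vee)^{\mb P_F}) \subseteq \Xo(G^\vee)$.

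The substantive part is (c), and the reverse inclusion is where the tameness hypothesis is essential. I would argue as follows. Assume $\mc G$ splits over a tamely ramified extension $E/F$; then one may choose a maximal $F$-torus $\mc T \subset \mc G$ that also splits over a tamely ramified extension — in fact a maximally split maximal torus works, or one invokes the existence of a tame maximal torus. The crucial consequence is that $\mb P_F$ acts trivially on $T^\vee$ (since $\mc T$ splits over the tame extension, $\mb I_F$ acts through its tame quotient, so $\mb P_F$ acts trivially), hence $(T^\vee)^{\mb P_F} = T^\vee$. Now take $\varphi \in \Xo(G^\vee)$. By part (a), the image of $\varphi$ in $H^1(\mb W_F, T^\vee)$ has depth zero, i.e.\ lies in $H^1(\mb W_F/\mb P_F, T^\vee)$; equivalently, $\varphi|_{\mb P_F}$, viewed in $Z(G^\vee) \subset T^\vee$, is a coboundary over $\mb P_F$. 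But $\mb P_F$ is a pro-$p$ group while $Z(G^\vee)$ — being a subgroup of a complex torus extended by a finite group — has, relative to the relevant finite quotients of $\mb P_F$, no room for a nontrivial such coboundary to arise except from an element already in $Z(G^\vee)$; more precisely one shows directly that a cocycle $\mb P_F \to Z(G^\vee)$ trivial in $H^1(\mb P_F, T^\vee)$ is already trivial in $H^1(\mb P_F, Z(G^\vee))$, using that the kernel $T^\vee/Z(G^\vee)$-part contributes nothing because $\mb P_F$ acts trivially and $H^1$ of a pro-$p$ group into a divisible group ($T^\vee$) is torsion-free, while the image of $\varphi|_{\mb P_F}$ is continuous hence factors through a finite $p$-group quotient. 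This forces $\varphi|_{\mb P_F} = 1$, i.e.\ $\varphi \in H^1(\mb W_F/\mb P_F, Z(G^\vee)^{\mb P_F})$, giving the reverse inclusion and hence equality.

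I expect the main obstacle in (c) to be the cohomological comparison in the last step — disentangling "trivial in $H^1(\mb P_F, T^\vee)$" from "trivial in $H^1(\mb P_F, Z(G^\vee))$" — which requires care because $Z(G^\vee)$ need not be connected (its component group is $\pi_0$ of the dual of the cocenter). One handles this by passing to a sufficiently small open subgroup of $\mb P_F$ acting trivially on everything and reducing to a statement about cocycles on a finite $p$-group valued in a divisible abelian group versus a subgroup thereof, where the obstruction vanishes for order reasons, exactly as in the pro-$p$ argument already used in the proof of Proposition \ref{prop:1.7}. For part (a) the only delicate point is verifying that the construction of Theorem \ref{thm:1.1} really does restrict compatibly to each maximal torus, which follows from chasing the $z$-extension diagram but should be spelled out.
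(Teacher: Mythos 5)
Your proposal is correct and follows essentially the same route as the paper: part (a) via the pushforward $Z(G^\vee)\to T^\vee$ and the compatibility of Theorem \ref{thm:1.1} with the LLC for tori, combined with the depth preservation of Proposition \ref{prop:1.7}; part (b) as an immediate consequence; and part (c) by choosing a tamely ramified maximal $F$-torus, so that $\mb P_F$ acts trivially on $T^\vee$ and depth zero in $H^1(\mb W_F,T^\vee)$ forces the cocycle to vanish on $\mb P_F$. The only caveat is that your cohomological detour in (c) is unnecessary and its justification is slightly off (with trivial $\mb P_F$-action, $H^1(\mb P_F,T^\vee)=\Hom_{\mathrm{cont}}(\mb P_F,T^\vee)$ is a torsion $p$-group, not torsion-free), but this is harmless: since the action is trivial, the coboundary $w\mapsto t\,w(t^{-1})$ is literally trivial, which is exactly the paper's one-line conclusion.
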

\begin{proof}
(a) From any L-embedding ${}^L T \to {}^L G$, we obtain a homomorphism
\begin{equation}\label{eq:1.7}
H^1 (\mb W_F, Z(G^\vee)) \to H^1 (\mb W_F, T^\vee) .    
\end{equation}
It does not depend on the choice of the $L$-embedding because only central
elements of $G^\vee$ play a role in the domain. For
$\varphi \in H^1 (\mb W_F, Z(G^\vee))$, by the naturality of
Langlands' map in Theorem \ref{thm:1.1}, the following coincide:
\begin{itemize}
\item the character of $T$ determined by $\varphi$ via the map \eqref{eq:1.7} followed by 
the LLC for tori \eqref{eqn:LLC-for-tori},
\item the restriction to $T$ of the character $\chi_\varphi$
from \eqref{eq:1.1}.
\end{itemize}
By Proposition \ref{prop:1.7}, the LLC for tori preserves the depth-zero property. Thus, 
for any maximal torus $T \subset G$, the condition that $\chi_\varphi |_T$ has 
depth zero is equivalent to the condition that $\varphi$ as an element of 
$H^1 (\mb W_F ,T^\vee)$ has depth zero.

(b) If $\varphi \in H^1 \big( \mb W_F / \mb P_F, Z(G^\vee)^{\mb P_F} \big)$, then clearly
$\varphi$ has depth zero as an element of $H^1 (\mb W_F, T^\vee)$, for any maximal torus
$T \subset G$. Combine this with part (a).

(c) By assumption, there exists a tamely ramified maximal $F$-torus $\mc T \subset \mc G$. 
Let $\varphi \in \Xo (G^\vee)$, thus $\chi_\varphi \in \Xo (G)$ and in particular 
$\chi_\varphi |_T$ has depth zero. Reversing the argument for part (a), $\varphi$ as an 
element of $H^1 (\mb W_F, T^\vee)$ has depth zero. Now $\varphi$ is an equivalence class for 
$T^\vee$-conjugacy, so there exists an element $t \in T^\vee$ such that $\varphi (w) = 
t w( t^{-1})$ for all $w \in \mb P_F$. Since $\mb P_F$ fixes $T^\vee$ pointwise (by the 
tame ramification of $\mc T$), we have $\varphi (w) = 1$ for all $w \in \mb P_F$.
This implies that $\varphi \in H^1 \big( \mb W_F / \mb P_F ,Z(G^\vee)^{\mb P_F} \big)$. 
\end{proof}

For an alternative characterization of $\Xo (G)$, we use some Bruhat--Tits theory. 
Let $F_\nr$ be a maximal unramified extension of $F$. Let $\mc S$ be a maximal 
$F$-split torus of $\mc G$, and let $\mc S'$ be a maximal 
$F_\nr$-split $F$-torus of $\mc G$ which contains $\mc S$ (for its existence, see for 
example \cite[Proposition 9.3.4]{KaPr}). By well-known results of Lang and Steinberg
(see for example \cite[Example 2.3.2 and Theorem 2.3.3]{KaPr}), the group $\mc G$ is 
quasi-split over $F_\nr$. Then $\mc T' := Z_{\mc G}(\mc S')$ is a minimal 
$F_\nr$-Levi subgroup and a maximal $F$-torus of $\mc G$. 

Let $\mc B (\mc G,F)$ be the Bruhat--Tits building of $F$. For every facet
$\ff$ of $\mc B (\mc G,F)$, we have the stabilizer group $G_\ff$ and its Moy--Prasad subgroups 
$G_{\ff,r}$ for $r \in \R_{\geq 0}$ \cite{MoPr2}.~Similarly, we have the
filtration subgroups $T'_r$ of the unique parahoric subgroup of $T'$.

\begin{prop}\label{prop:1.3}
For every facet $\ff$ of $\mc B (\mc G,F)$,
the group $T'_{0+} \cdot \mr{Im}(G_\Sc \to G)$ contains $G_{\ff,0+}$.
\end{prop}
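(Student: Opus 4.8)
The plan is to reduce the statement to a claim about a single, well-chosen facet and then propagate it. First I would recall that the pro-unipotent radical $G_{\ff,0+}$ is generated by the root subgroups $U_{\alpha,\ff,0+}$ (for affine roots $\alpha$ attached to $\ff$, relative to the torus $\mc T'$, which is the centralizer of $\mc S'$ and hence gives a valued root datum over $F_\nr$ to which Galois descent applies) together with the topologically nilpotent part $T'_{0+}$ of the bounded torus. Since $\mc T'$ contains a maximal $F_\nr$-split torus, every apartment and facet can be described using $\mc T'$, so this decomposition is available uniformly. The key point is that each root subgroup $U_\alpha$ lies in the image of $\mc G_\Sc$: the root datum of $\mc G_\der$ and $\mc G_\Sc$ have the same roots, so $U_{\alpha,\ff,0+} \subset \mr{Im}(G_\Sc \to G)$, and the Moy--Prasad filtrations on these root subgroups are compatible with the covering map. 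Thus the ``unipotent direction'' of $G_{\ff,0+}$ is automatically absorbed into $\mr{Im}(G_\Sc \to G)$, and only the toral contribution $(T'')_{0+}$, where $\mc T''$ is a maximal torus through $\ff$, needs to be controlled.

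Second, I would handle the toral part. For the facet $\ff$, the image of $G_{\ff,0+}$ in $T''$ (or more precisely, the toral part of $G_{\ff,0+}$ after writing $G_\ff$ via an Iwahori decomposition adapted to $\ff$) is contained in $(T'')_{0+}$ for the maximal torus $\mc T''$ whose apartment contains $\ff$. One then needs to compare $(T'')_{0+}$ with $T'_{0+}$. Since $\mc T'$ and $\mc T''$ are both maximal $F$-tori of $\mc G$ containing a maximal $F_\nr$-split torus, they are $\mr{Im}(G_\Sc \to G)$-conjugate (or at least their pro-unipotent radicals agree modulo $\mr{Im}(G_\Sc \to G)$): the obstruction to conjugating one to the other lies in $G_\der$, and more carefully, the quotient $T''/(T'' \cap G_\Sc \text{-image})$ is computed by the same cocharacter lattice data regardless of which such torus we pick, because passing to $G/G_\Sc$ kills all the $\mc G_\der$-directions. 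So $(T'')_{0+} \subset T'_{0+} \cdot \mr{Im}(G_\Sc \to G)$.

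Combining the two steps: $G_{\ff,0+}$ is generated by root subgroups (inside $\mr{Im}(G_\Sc \to G)$) and a toral part inside $(T'')_{0+} \subset T'_{0+} \cdot \mr{Im}(G_\Sc \to G)$, whence $G_{\ff,0+} \subset T'_{0+} \cdot \mr{Im}(G_\Sc \to G)$. It suffices to do this for $\ff$ in a single apartment and then translate by $G$, noting that $G$ acts transitively on apartments and $\mr{Im}(G_\Sc \to G)$ together with $T'$ (in fact $Z_{\mc G}(\mc S')(F)$) already acts transitively enough on the relevant facets modulo the normalizer, so conjugating the containment is harmless since conjugating $T'_{0+}$ by an element $g \in G$ changes it by something that differs from $T'_{0+}$ again only by $\mr{Im}(G_\Sc \to G)$, by the same cocharacter-lattice argument.

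The main obstacle I anticipate is the wildly ramified case, where $\mc T'$ need not be $\mc T''$ and the Galois descent of the Moy--Prasad filtration on the root subgroups over $F$ (as opposed to $F_\nr$) is subtler: one must be careful that $U_{\alpha,\ff,0+}^{\,\mr{Gal}(F_\nr/F)}$ really lands in $\mr{Im}(G_\Sc \to G)(F)$ and that the filtration breaks match up. I would handle this by working over $F_\nr$ first, where $\mc G$ is quasi-split and everything is standard, establishing the containment $G_{\ff,0+}(F_\nr) \subset (T')_{0+}(F_\nr) \cdot \mr{Im}(\mc G_\Sc(F_\nr) \to \mc G(F_\nr))$, and then taking Frobenius-fixed points, using $H^1(F_\nr^{\mr{ur}}/F, \mc G_\Sc) = 1$ (Steinberg) to descend the $\mc G_\Sc$-part, and $G_{\ff,0+} = G_{\ff,0+}(F_\nr)^{\Fr}$ by Bruhat--Tits. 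The comparison of $(T')_{0+}$ over $F$ versus $F_\nr$ is fine since $\mc T'$ is by construction $F_\nr$-split-friendly.
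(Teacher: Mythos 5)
Your overall architecture (reduce to a single apartment, split $G_{\ff,0+}$ into root-group and toral contributions, and absorb the root groups into $\mr{Im}(G_\Sc \to G)$) is reasonable, and the observation that the root subgroups, with their filtrations, lift to $\mc G_\Sc$ is correct and parallels the paper's use of $\mf u_\alpha (F_\nr)_{\ff,r} \subset \mf g_\der (F_\nr)_{\ff,r}$. But the two places where the real difficulty sits are not actually proved. First, the decomposition you invoke is only available over $F_\nr$: over $F$ the group need not be quasi-split, the centralizer of a maximal $F$-split torus is a minimal Levi rather than a torus, so ``the toral part of $G_{\ff,0+}$'' is not the filtration subgroup of a maximal torus $\mc T''$, and no decomposition of the $F$-points $G_{\ff,0+}$ of the form you use is established. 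Second, your pivotal claim that $(T'')_{0+} \subset T'_{0+} \cdot \mr{Im}(G_\Sc \to G)$ (or that $\mc T'$ and $\mc T''$ are conjugate under $\mr{Im}(G_\Sc \to G)$) is unjustified: maximal unramified tori are not rationally conjugate in general, and the ``same cocharacter lattice'' argument ignores exactly what the proposition is about, namely comparing Moy--Prasad filtrations of possibly wildly ramified tori inside $G$ modulo the image of $G_\Sc$; that intermediate claim is of the same nature and difficulty as the proposition itself. The same objection applies to your final reduction step, where you conjugate by an arbitrary $g \in G$ and assert that $g T'_{0+} g^{-1}$ differs from $T'_{0+}$ only by $\mr{Im}(G_\Sc \to G)$; the paper instead uses that $G_\Sc$ already acts transitively on apartments \cite[Proposition 9.3.22]{KaPr}, so the conjugation takes place in the normal subgroup $\mr{Im}(G_\Sc \to G)$ and is genuinely harmless.

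The descent from $F_\nr$ to $F$, which you postpone to your last paragraph, is the actual crux, and the tool you name does not do the job. The map $T'(F_\nr)_{0+} \times \mc G_\Sc (F_\nr)_{\ff,0+} \to \mc G (F_\nr)_{\ff,0+}$, $(t,g) \mapsto t\, q(g)$, is not a group homomorphism unless $\mc G$ is a torus, so surjectivity on Frobenius-fixed points cannot be extracted by applying $H^1 (F, \mc G_\Sc) = 1$ to a ``kernel''; Steinberg's theorem concerns Galois cohomology of $\mc G_\Sc$ and says nothing about fixed points of this non-homomorphic product decomposition of filtration subgroups. One needs either a Lang-type successive approximation on graded pieces, or---as the paper does---a detour through Lie algebras: via the Moy--Prasad isomorphism $G_{\ff,r}/G_{\ff,s} \cong \mf g_{\ff,r}/\mf g_{\ff,s}$ the question becomes surjectivity of the additive map $\mf t'_r \oplus \mf g_{\der,\ff,r} \to \mf g_{\ff,r}$, which \emph{can} be checked after passing to $F_\nr$ (where your root-space argument applies; this is exactly the paper's computation), after which one climbs back up a filtration $G_{\ff,r_0} \supset G_{\ff,r_1} \supset \cdots$ using completeness to conclude surjectivity of $T'_{0+} \times G_{\Sc,\ff,0+} \to G_{\ff,0+}$. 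Without one of these devices your argument does not close precisely in the non-quasi-split and wildly ramified cases, which are the cases the proposition is really about.
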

\begin{proof}
Recall that $\mc B (\mc G_\Sc,F)$ is the reduced building of $G$, and that the set of its 
apartments is naturally in bijection with the set of apartments of $\mc B (\mc G,F)$.
By \cite[Proposition 9.3.22]{KaPr}, $G_\Sc$ acts transitively on the set of apartments
of $\mc B (\mc G,F)$. Since $G_{g \ff,0+} = g G_{\ff, 0+} g^{-1}$ for $g \in G_\Sc$,
we may assume that $\ff$ lies in the apartment $\mh A_S$ associated to $S$.

The Moy--Prasad subgroups $G_{\ff,r}$ give rise to $\mf o_F$-Lie subalgebras 
$\mf g_{\ff,r}$ of $\mf g = \mr{Lie}(G)$. 
Let $q : \mc G_\Sc \to \mc G_\der$ be the quotient map and consider the map
\begin{equation}\label{eq:1.9}
T' \times G_\Sc \to G : (t,g) \mapsto t \, q(g) .    
\end{equation}
By the construction of $G_{\ff,r}$ in \cite[Definition 7.3.3 and \S 9.8]{KaPr}, 
for any $r \in \R_{\geq 0}$, \eqref{eq:1.9} restricts to a map
\begin{equation}\label{eq:1.10}
T'_r \times G_{\Sc,\ff,r} \to G_{\ff,r} .
\end{equation}
The Lie algebra of $T' \times G_\Sc$
is $\mf t' \oplus \mf g_\der$, and similar to \eqref{eq:1.10}, the addition map
\begin{equation}\label{eq:1.8}
\mf t' \oplus \mf g_\der \to \mf g \quad \text{sends} \quad
\mf t'_r \oplus \mf g_{\der,\ff,r} \;\text{to}\; \mf g_{\ff,r}. 
\end{equation}
Let $0 < r < s \leq 2r \in \R$, and assume that either $r \leq 1$ or $s \leq 2r - 1$. 
The maps \eqref{eq:1.10} and \eqref{eq:1.8} fit in a commutative diagram 
with the Moy--Prasad isomorphism (see \cite[Theorem 13.5.1]{KaPr}): 
\begin{equation}\label{eq:1.11}
\begin{tikzcd}
G_{\ff,r} / G_{\ff,s}  \arrow[]{r}{\sim} & \mf g_{\ff,r} / \mf g_{\ff,s} \\
(T'_r \times G_{\Sc,\ff,r}) / (T'_s \times G_{\Sc,\ff,s})  \arrow[]{r}{\sim}\arrow[]{u}{} &
(\mf t'_r \oplus \mf g_{\der,\ff,r}) / (\mf t'_s \oplus \mf g_{\der,\ff,s})\arrow[]{u}{} 
\end{tikzcd}
\end{equation}
Although \eqref{eq:1.9} is a group homomorphism only when $\mc G$ is a torus,
the commutativity of $G_{\ff,r} / G_{\ff,s}$ implies that both vertical maps in 
\eqref{eq:1.11} are group homomorphisms. We claim that the 
vertical map on the right-hand side in \eqref{eq:1.11} is surjective.

In the next steps, we consider Lie algebras as $F$-schemes.
We follow the convention that $\mf g (F)$ is the same as $\mf g$,
and we let $\mf g (F_\nr) := \mf g \otimes_F F_\nr$ be the Lie algebra of $F_\nr$-rational 
points of $\mf g$. 
It carries an action of $\mb W_F / \mb I_F = \langle \Fr_F \rangle$, and by
definition $\mf g_{\ff,r} = \mf g (F_\nr)_{\ff,r}^{\; \Fr_F}$. The same
holds for $\mf t'_r$ and $\mf g_{\der,\ff,r}$. Therefore, for our claim, we need to
check that the addition map
\begin{equation}\label{eq:1.12}
\mf t' (F_\nr)_r \oplus \mf g_\der (F_\nr)_{\ff,r} \to \mf g (F_\nr)_{\ff,r}    
\end{equation}
is surjective.~Our earlier assumption on $\ff$ implies that it lies in the apartment of
$\mc B (\mc G,F_\nr)$ associated to $\mc S'$. By \cite[Definition 7.3.3]{KaPr},
$\mf g (F_\nr)_{\ff,r}$ is generated by $\mf t' (F_\nr)_r$ and subspaces
$\mf u_\alpha (F_\nr)_{\ff,r}$, where $\mf u_\alpha (F_\nr) \subset \mf g (F_\nr)$ 
is the weight space for a root $\alpha \in R(\mc G, \mc S')$.
These $\mf u_\alpha (F_\nr)_{\ff,r}$ are also contained in $\mf g_\der (F_\nr)_{\ff,r}$,
which shows the surjectivity of \eqref{eq:1.12}. It follows that the vertical map 
on the right-hand side in the diagram \eqref{eq:1.11} is surjective.

By the commutativity of \eqref{eq:1.11}, the vertical map on the left-hand side
\begin{equation}\label{eq:1.13}
(T'_r \times G_{\Sc,\ff,r}) / (T'_s \times G_{\Sc,\ff,s}) \to
G_{\ff,r} / G_{\ff,s} 
\end{equation}
is surjective as well. The jumps in the Moy--Prasad filtration of
$G_\ff$ form a discrete subset of $\R_{\geq 0}$, so we can find $r_0 > 0$
such that $G_{\ff,0+} = G_{\ff,r_0}$. Extend this to a sequence
$r_0 < r_1 < r_2 < \ldots$ with $\lim_{n \to \infty} r_n = \infty$, such that
each pair $(r_n, r_{n+1})$ satisfies the conditions in \cite[Theorem 13.5.1]{KaPr}. 
This provides a filtration of $G_{\ff,0+}$ with 
successive subquotients as in the top row of \eqref{eq:1.11}. The same holds
for $T'$ and $G_\Sc$, with the same $r_n$. Now the surjectivity of
\eqref{eq:1.13} with $r = r_n, s = r_{n+1}$ and the completeness of
$\mf g$ with respect to the $p$-adic topology imply that \eqref{eq:1.10} is
surjective for $r = r_0$. In other words, $T'_{0+} \times 
\mr{Im}(G_{\Sc,\ff,0+} \to G_{\ff,0+})$ surjects onto $G_{\ff,0+}$.
\end{proof}

Recall that a chamber $C$ of $\mc B (\mc G,F)$ is a facet of maximal dimension, 
and that the associated parahoric subgroup $G_{C,0}$ is called an Iwahori 
subgroup of $G$. While $G_{C,0}$ is minimal among the parahoric subgroups of
$G$, its pro-unipotent radical $G_{C,0+}$ is maximal among the subgroups 
$G_{\ff,0+}$ for facets $\ff$ of $\mc B (\mc G,F)$ \cite[Lemma 7.4.12]{KaPr}. \\
We are ready to establish an alternative description of $\Xo (G)$. It shows
that tensoring by elements of $\Xo (G)$ stabilizes the category of depth-zero
representations of $G$.

\begin{thm}\label{thm:1.4}
\enuma{
\item The group $\Xo (G)$ is equal to
\begin{equation*}\label{eqn:thm:1.4}
\{ \chi : G / G_\Sc \to \C^\times \mid  G_{\ff,0+} \subset
\ker (\chi) \text{ for every facet } \ff \text{ of } \mc B (\mc G,F) \}.
\end{equation*}
\item $\Xo (G) = \{ \chi : G / G_\Sc \to \C^\times \mid G_{C,0+} \subset
\ker (\chi) \text{ for one chamber } C \text{ of } \mc B (\mc G,F) \}$. 
}
\end{thm}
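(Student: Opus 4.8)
Write $B$ and $D$ for the sets of characters on the right-hand sides of (a) and (b) respectively. The plan is to prove the chain $\Xo (G) \subseteq B \subseteq D \subseteq \Xo (G)$; since every chamber is a facet, $B \subseteq D$ is immediate, so the task reduces to the two inclusions $\Xo (G) \subseteq B$ and $D \subseteq \Xo (G)$, after which $\Xo(G) = B = D$ gives both (a) and (b).

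For $\Xo (G) \subseteq B$ I would use Proposition \ref{prop:1.3} directly. Given $\chi \in \Xo (G)$ and an arbitrary facet $\ff$ of $\mc B (\mc G,F)$, that proposition yields $G_{\ff,0+} \subseteq T'_{0+} \cdot \mr{Im}(G_\Sc \to G)$. Now $\chi$ is trivial on $\mr{Im}(G_\Sc)$ by assumption, and trivial on $T'_{0+}$ because $\mc T'$ is a maximal $F$-torus of $\mc G$, so $\chi|_{T'}$ has depth zero by the very definition of $\Xo(G)$. Hence $G_{\ff,0+} \subseteq \ker (\chi)$ for every $\ff$, i.e.\ $\chi \in B$; this half is essentially immediate once Proposition \ref{prop:1.3} is in hand.

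For $D \subseteq \Xo (G)$ the argument proceeds in three steps. First, from $\chi \in D$, i.e.\ $G_{C_0,0+} \subseteq \ker(\chi)$ for a single chamber $C_0$, I would upgrade to $G_{C,0+} \subseteq \ker(\chi)$ for \emph{every} chamber $C$: indeed $\chi$ is invariant under $G$-conjugation, being a character of the abelian group $G/G_\Sc$, and the image of $G_\Sc$ in $G$ acts transitively on the chambers of $\mc B (\mc G,F)$ --- transitively on apartments by \cite[Proposition 9.3.22]{KaPr}, and transitively on the chambers of a fixed apartment because the affine Weyl group acts simply transitively on alcoves and its reflections lie in the image of $G_\Sc$. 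Second, choosing a chamber $C$ contained in the apartment $\mh A_S$ associated to $\mc S$, I would note $T'_{0+} \subseteq G_{C,0+}$: this is read off from \eqref{eq:1.10} (the restriction of the map $(t,g)\mapsto t\, q(g)$ to Moy--Prasad filtration steps, as in the proof of Proposition \ref{prop:1.3}) by taking $r$ a sufficiently small positive real number, so that $T'_r = T'_{0+}$ and $G_{C,r}=G_{C,0+}$, and then setting $g = 1$. Combining these, $T'_{0+} \subseteq \ker(\chi)$, so $\chi|_{T'}$ has depth zero. Third --- and this is the substantive point --- one must upgrade from the single torus $\mc T'$ to \emph{all} maximal tori; since $\mc T' = Z_{\mc G}(\mc S')$ contains the maximal unramified $F$-torus $\mc S'$, this is exactly the implication (ii) $\Rightarrow$ (i) of Theorem \ref{thm:I}, which I would formulate and prove separately as Lemma \ref{lem:1.9}. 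Granting it, $\chi|_T$ has depth zero for all maximal tori $T \subset G$, hence $\chi \in \Xo(G)$.

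The main obstacle is this last step --- passing from depth-zeroness on the one well-chosen torus $\mc T'$ to all maximal tori --- which is precisely where wild ramification is felt: on a general, possibly wildly ramified, maximal torus the depth of $\chi|_T$ is not controlled by the Bruhat--Tits building in a usable way, so one cannot simply reapply Proposition \ref{prop:1.3}. I would handle it on the dual side: by Theorem \ref{thm:1.1} write $\chi = \chi_\varphi$ with $\varphi \in H^1 (\mb W_F, Z(G^\vee))$, by Lemma \ref{lem:1.2}(a) reinterpret the statement that $\chi|_T$ has depth zero as the statement that $\varphi$ has depth zero in $H^1(\mb W_F, T^\vee)$ via the canonical central map $H^1 (\mb W_F, Z(G^\vee)) \to H^1(\mb W_F, T^\vee)$, and then compare this condition as $T$ ranges over all maximal tori, reducing --- after base change to $F_\nr$, where $\mc G$ is quasi-split --- to the torus $\mc T' = Z_{\mc G}(\mc S')$. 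The remaining ingredients (Proposition \ref{prop:1.3}, the two transitivity assertions, and the inclusion $T'_{0+} \subseteq G_{C,0+}$) amount to routine Bruhat--Tits bookkeeping.
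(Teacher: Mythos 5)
The first half of your plan ($\Xo(G) \subseteq B$ via Proposition \ref{prop:1.3}, and the bookkeeping with conjugation and transitivity on chambers to pass from one chamber to all facets) matches the paper and is fine. The genuine gap is your third step in proving $D \subseteq \Xo(G)$: the upgrade from ``$\chi|_{T'}$ has depth zero for the one torus $T' = Z_G(S')$'' to ``$\chi|_T$ has depth zero for \emph{every} maximal torus $T$''. You propose to quote this as Lemma \ref{lem:1.9} (equivalently (ii)$\Rightarrow$(i) of Theorem \ref{thm:I}), but in the paper that lemma is \emph{deduced from} Theorem \ref{thm:1.4}: its proof base-changes to an unramified $E$, applies Proposition \ref{prop:1.3} over $E$, and then invokes Theorem \ref{thm:1.4} for $\mc G(E)$ to get depth zero on all maximal $E$-tori. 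So using it inside the proof of Theorem \ref{thm:1.4} is circular unless you supply an independent proof, and your sketch of one does not work: after base change to $F_\nr$ the group is quasi-split, but an arbitrary maximal $F$-torus (e.g.\ an elliptic torus split only by a ramified, possibly wild, extension) is \emph{not} $G(F_\nr)$-conjugate to $\mc T'$, and the depth-zero condition for $\varphi$ in $H^1(\mb W_F, T^\vee)$ concerns $\mb P_F$, which is insensitive to unramified base change but cannot be transported between non-conjugate tori on the dual side; indeed Lemma \ref{lem:1.2}(b),(c) show that for wildly ramified tori the depth-zero locus in $H^1(\mb W_F,T^\vee)$ is genuinely subtler than $H^1(\mb W_F/\mb P_F, T^{\vee,\mb P_F})$-membership of a fixed cocycle. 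This ``all maximal tori at once'' step is exactly where the wild ramification bites, and it is the heart of the theorem.

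The paper closes this gap by a direct group-side argument, which your proposal is missing: given $\chi$ trivial on $G_{\ff,0+}$ for all facets, take \emph{any} maximal torus $T \subset G$; the compact group $T^0$ fixes a facet $\ff$ by the Bruhat--Tits fixed point theorem, and via functoriality of the Kottwitz homomorphism one has $T^0 \subset G^0 \cap G_\ff = G_{\ff,0}$; then $T_{0+}$, being a pro-$p$ subgroup of $G_{\ff,0}$, is contained in a maximal pro-$p$ subgroup of $G_{\ff,0}$, and by \cite[Propositions 13.5.2.1--2]{KaPr} these maximal pro-$p$ subgroups are exactly the groups $G_{\ff',0+}$ for chambers $\ff'$ with $G_{\ff',0+} \subset G_{\ff,0}$; hence $T_{0+} \subset G_{\ff',0+} \subset \ker(\chi)$. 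This handles every maximal torus, wildly ramified or not, without any appeal to the dual side or to Lemma \ref{lem:1.9}. If you replace your step 3 by this argument (and then deduce part (b) as you do, or as the paper does, from $G$-transitivity on chambers), the proof is complete; as written, the proposal has a genuine hole at its crucial point.
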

\begin{proof}
(a) For $\chi_\varphi \in \Xo (G)$, $\chi_\varphi |_{T'}$ has depth zero, thus 
$T'_{0+} \subset \ker (\chi_\varphi)$. By Proposition \ref{prop:1.3}, 
$\ker (\chi_\varphi)$ contains $G_{\ff,0+}$ for all $\ff$.

Consider $\chi$ as in the statement, and let $T \subset G$ be a maximal
torus.~The group $T^0$ is compact, so by the Bruhat--Tits fixed point
theorem $T^0$ stabilizes a facet $\ff$ of $\mc B (\mc G,F)$. Recall
that by definition $T_{0+}$ is the maximal pro-$p$ subgroup of $T^0$.
Let $G^0$ be the kernel of the Kottwitz homomorphism for $G$. By the
functoriality of the Kottwitz homomorphism \cite[\S 11.5]{KaPr} and
by \cite[Proposition 7.7.5]{KaPr}, we have 
\[
T^0 \subset G^0 \cap G_\ff = G_{\ff,0}.
\]
By \cite[Proposition 13.5.2.1]{KaPr}, we know that $G_{\ff,0+}$ is a pro-$p$
subgroup of $G_{\ff,0}$, but it is not necessarily maximal. Together with 
\cite[Proposition 13.5.2.2]{KaPr}, we see that the maximal pro-$p$ subgroups
of $G_{\ff,0}$ are the groups $G_{\ff',0+}$, with $\ff'$ a chamber of 
$\mc B (\mc G,F)$ such that $G_{\ff',0+} \subset G_{\ff,0}$.\footnote{The condition
on $\ff'$ is equivalent to $\ff \subset \overline{\ff'}$, but we do not need
this.} Hence $T_{0+} \subset G_{\ff',0+}$ for such a chamber $\ff'$. By the 
assumption on $\chi$, this means $T_{0+} \subset \ker (\chi)$. 

This holds for any maximal torus $T \subset G$, thus $\chi \in \Xo (G)$.\\
(b) By part (a), $\Xo (G)$ is contained in the right hand side. Let $\chi$ be
as in the statement and let $\ff$ be an arbitrary facet of $\mc B (\mc G,F)$.
Let $C'$ be a chamber of $\mc B (\mc G,F)$ such that $\overline{C'} \supset \ff$,
so $G_{\ff,0+} \subset G_{C',0+}$.~There is only one $G$-orbit of chambers in $\mc B (\mc G,F)$ \cite[Proposition
9.3.22]{KaPr}, so we can find $g \in G$ such that $g C = C'$. The 
equivariance properties of Moy--Prasad subgroups \cite[Proposition 13.2.5]{KaPr}
gives 
\[
\ker (\chi) = g \ker (\chi) g^{-1} \supset g G_{C,0+} g^{-1} = G_{g C,0+}
= G_{C',0+} \supset G_{\ff,0+} .
\]
Now part (a) shows that $\chi \in \Xo (G)$.
\end{proof}

The next lemma shows that the definition of $\Xo (G)$ can be simplified: instead 
of requiring $\chi|_T$ to have depth zero 
``for all maximal tori", it suffices to check on one well-chosen torus in $G$.

\begin{lem}\label{lem:1.9}
Let $\mc T$ be a maximal $F$-torus of $\mc G$ that contains a maximal unramified
$F$-torus $\mc T_\nr$ of $\mc G$. Suppose that $\varphi \in H^1 (\mb W_F, Z(G^\vee))$ and 
that $\chi_\varphi |_T$ has depth zero. Then $\varphi \in \Xo (G^\vee)$ and $\chi_\varphi \in
\Xo (G)$. In particular
\[
\Xo (G) = \{ \chi : G / G_\Sc \to \C^\times \mid \chi |_T \text{ has depth zero} \}.
\]
\end{lem}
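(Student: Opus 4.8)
The plan is to leverage Theorem \ref{thm:1.4}(b), which reduces the membership $\chi \in \Xo(G)$ to a single condition: $G_{C,0+} \subset \ker(\chi)$ for one chamber $C$ of $\mc B(\mc G, F)$. So the strategy is: starting from the hypothesis that $\chi_\varphi|_T$ has depth zero for the distinguished torus $\mc T \supset \mc T_\nr$, I would deduce $G_{C,0+} \subset \ker(\chi_\varphi)$ for some well-chosen chamber $C$, and then Theorem \ref{thm:1.4}(b) immediately gives $\chi_\varphi \in \Xo(G)$, hence $\varphi \in \Xo(G^\vee)$.

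First I would exploit the fact that $\mc T$ contains a maximal unramified $F$-torus $\mc T_\nr$. A maximal unramified torus of $\mc G$ is $F_\nr$-split and hence, being defined over $F$, is contained in (the centralizer of) a maximal $F_\nr$-split $F$-torus; in fact one can arrange $\mc T_\nr$ to \emph{be} a maximal $F_\nr$-split $F$-torus, so that $\mc T' = Z_{\mc G}(\mc S')$ from the setup before Proposition \ref{prop:1.3} can be taken to contain $\mc T_\nr$ after conjugating by an element of $G_\Sc$ (using transitivity of $G_\Sc$ on apartments, \cite[Proposition 9.3.22]{KaPr}). Since conjugating $\chi_\varphi$ by $G_\Sc$ does not change it (as $\chi_\varphi$ is trivial on $G_\Sc$) and does not change the depth-zero condition on maximal tori, there is no loss of generality. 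Thus I may assume $\mc T_\nr \subset \mc T'$.

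Next, the apartment $\mh A_{S'}$ associated to $\mc S' \subset \mc T'$ contains chambers of $\mc B(\mc G, F)$; pick one such chamber $C$. The key point is that $T'_{0+} \subset \ker(\chi_\varphi)$ is enough: the hypothesis gives $T_{0+} \subset \ker(\chi_\varphi)$, and I need to transfer this to $T'_{0+}$. Here is where $\mc T_\nr \subset \mc T \cap \mc T'$ does the work — the pro-$p$ radicals $T_{0+}$ and $T'_{0+}$ are both governed by the same unramified part. More precisely, $T'_{0+}$ is generated (topologically) by the pro-$p$ radical of the parahoric of $T_\nr$ together with pieces coming from roots; one shows $T'_{0+}$ and $T_{0+}$ agree modulo $\mr{Im}(G_\Sc \to G)$, or directly that $\chi_\varphi|_{T'}$ depth zero follows from $\chi_\varphi|_T$ depth zero since both restrict compatibly through $\mc T_\nr$ and $\chi_\varphi$ factors through $G/G_\Sc$, an abelian quotient on which both tori surject with the same image of pro-$p$ radical. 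Granting $T'_{0+} \subset \ker(\chi_\varphi)$, Proposition \ref{prop:1.3} applied to the chamber $C$ gives $G_{C,0+} \subset T'_{0+} \cdot \mr{Im}(G_\Sc \to G) \subset \ker(\chi_\varphi)$, and Theorem \ref{thm:1.4}(b) finishes the proof. The final displayed equality then follows because the inclusion $\Xo(G) \subset \{\chi : \chi|_T \text{ depth zero}\}$ is immediate from the definition of $\Xo(G)$, while the reverse inclusion is exactly what we just proved.

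The main obstacle I anticipate is the transfer step: showing rigorously that depth zero of $\chi_\varphi|_T$ forces depth zero of $\chi_\varphi|_{T'}$ (equivalently $T'_{0+} \subset \ker \chi_\varphi$). Since $\mc T$ and $\mc T'$ are genuinely different maximal tori, one cannot simply intersect; the argument must use that $\chi_\varphi$ is trivial on $G_\Sc$ and that both tori share the maximal unramified torus $\mc T_\nr$, together with a comparison of the pro-$p$ radicals $T_{0+}$, $T'_{0+}$, $(T_\nr)_{0+}$ — likely via the structure of these tori over $F_\nr$ and a Moy--Prasad / Kottwitz-homomorphism bookkeeping analogous to the proof of Proposition \ref{prop:1.3}. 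Handling the possibly wildly ramified case here, where $\mc T$ need not split tamely, is the delicate part, but the presence of $\mc T_\nr$ inside $\mc T$ is precisely the hypothesis that makes it go through.
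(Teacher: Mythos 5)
Your reduction to the torus $T'$ of Proposition \ref{prop:1.3} breaks down at the very first step. You claim that, after conjugating by $G_\Sc$, one may assume $\mc T_\nr \subset \mc T' = Z_{\mc G}(\mc S')$ with $\mc S' \supset \mc S$ a maximal $F$-split torus. This is false in general: maximal unramified ($F_\nr$-split) $F$-tori fall into several $G$-conjugacy classes, and only one of them contains a maximal $F$-split torus. For instance, the elliptic unramified maximal torus of $\SL_2(F)$ is a maximal unramified torus that contains no split torus and is not conjugate to the diagonal torus; the transitivity of $G_\Sc$ on apartments of $\mc B(\mc G,F)$ only moves maximal $F$-split tori around and does not help here. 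Note also that if one \emph{could} arrange $\mc T_\nr \subset \mc T'$, then since $\mc T = Z_{\mc G}(\mc T_\nr)$ one would get $\mc T = \mc T'$ outright, so there would be no ``transfer step'' at all; conversely, in the genuinely new (e.g.\ elliptic) cases the transfer of the depth-zero property from $T$ to $T'$ that you flag as the main obstacle \emph{is} the entire content of the lemma, and your sketch (``both tori surject onto the abelian quotient with the same image of pro-$p$ radical'') is not an argument — it is unclear how to make it work over $F$, which is presumably why the paper does not attempt it.

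The paper's route is different: choose a finite \emph{unramified} extension $E/F$ splitting $\mc T_\nr$. Over $E$, $\mc T_\nr$ becomes a maximal $E$-split torus, so $\mc T(E)$ literally has the form of $T'$ in Proposition \ref{prop:1.3} for the group $\mc G(E)$. One then needs Proposition \ref{prop:1.8} (the norm compatibility $\varphi_\chi|_{\mb W_E} = \varphi_{\chi \circ N_{E/F}}$) to identify $\chi_{\varphi,E}|_{\mc T(E)}$ with $\chi_\varphi|_{\mc T(F)} \circ N_{E/F}$, whence $\mc T(E)_{0+} \subset \ker \chi_{\varphi,E}$; Proposition \ref{prop:1.3} and Theorem \ref{thm:1.4} applied over $E$ then give depth zero of $\chi_{\varphi,E}$ on all maximal $E$-tori, and one descends to $F$ via Proposition \ref{prop:1.7} together with $\mb P_E = \mb P_F$ (unramifiedness of $E/F$) and Lemma \ref{lem:1.2}(a). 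None of these base-change and descent ingredients appear in your proposal, and without them (or a genuine proof of your transfer step) the argument does not go through. Your use of Theorem \ref{thm:1.4}(b) and Proposition \ref{prop:1.3} as the target of the reduction is the right instinct, but the reduction itself is the missing — and hard — part.
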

\begin{proof}
Since $\mc G$ is quasi-split over $F_\nr$, $Z_{\mc G}(\mc T_\nr)$ is a maximal torus
of $\mc G$ and hence $Z_{\mc G}(\mc T_\nr) = \mc T$. Let $E/F$ be a finite unramified
extension splitting $\mc T_\nr$. Let 
\[
\chi_{\varphi,E} : \mc G(E) / \mc G_\Sc (E) \to \C^\times
\]
be the character associated to $\varphi |_{\mb W_E}$ in Theorem \ref{thm:1.1}. By the
compatibility of Theorem \ref{thm:1.1} with the LLC for tori (see the proof of
Lemma \ref{lem:1.2}(a)), $\chi_{\varphi,E} |_{\mc T (E)}$ is the character with Langlands
parameter $\varphi |_{\mb W_E} \in H^1 (\mb W_E, T^\vee)$. By Proposition \ref{prop:1.8}, 
this character is equal to $\chi_\varphi |_{\mc T (F)} \circ N_{E/F}$.

The map $N_{E/F} : \mc T(E) \to \mc T(F)$ can be viewed as a homomorphism of $F$-tori.
By the functoriality of filtrations of tori \cite[Definition 7.2.2 and Proposition
B.10.10]{KaPr}, $N_{E/F} (\mc T (E)_{0+}) \subset \mc T (F)_{0+}$.
By assumption, $\chi_\varphi |_{\mc T (F)}$ has depth zero, so
\[
\mc T(E)_{0+} \subset \ker (\chi_\varphi |_{\mc T(F)} \circ N_{E/F}) =
\ker (\chi_{\varphi,E} |_{\mc T (E)}) .
\]
This implies 
\[
\mc T(E)_{0+} \cdot \mr{Im}(\mc G_\Sc (E) \to \mc G (E)) 
\;\subset\; \ker (\chi_{\varphi,E}). 
\]
The $E$-torus $\mc T$ contains the maximal unramified torus $\mc T_\nr$, which is also
a maximal $E$-split torus in $\mc G$. Thus $\mc T (E)$ has the form of $T'$ above.
Proposition \ref{prop:1.3} applies and we see that $\ker (\chi_{\varphi,E})$ contains
$\mc G (E)_{\ff,0+}$, for every facet $\ff$ of $\mc B (\mc G,E)$. 

By Theorem \ref{thm:1.4}, $\chi_{\varphi,E} |_{\mc D (E)}$ has depth zero for 
every maximal $E$-torus $\mc D \subset \mc G$, thus in particular for every maximal
$F$-torus. By Proposition \ref{prop:1.7}, $\varphi |_{\mb W_E} \in
H^1 (\mb W_E, \mc D^\vee)$ has depth zero. Since $E/F$ is unramified, $\mb P_E = \mb P_F$
and $\varphi \in H^1 (\mb W_F, \mc D^\vee)$ also has depth zero. By Lemma \ref{lem:1.2}(a),
$\varphi \in \Xo (G^\vee)$ and $\chi_\varphi \in \Xo (G)$.
\end{proof}


\begin{thebibliography}{99}


\bibitem[Bor]{Bor} A. Borel, 
``Automorphic L-functions", 
Proc. Symp. Pure Math {\bf 33.2} (1979), 27--61

\bibitem[BrTi]{BrTi1} F. Bruhat, J. Tits,
``Groupes alg\`ebriques simples sur un corps local",
pp. 23--36 in: \emph{Proc. Conf. Local Fields (Driebergen, 1966)}, 
Springer, Berlin, 1967

\bibitem[Hai]{Hai} T.J. Haines, 
``The stable Bernstein center and test functions for Shimura varieties", 
pp. 118--186 in: \emph{Automorphic forms and Galois representations}, 
London Math. Soc. Lecture Note Ser. {\bf 415}, Cambridge University Press, 2014


\bibitem[KaPr]{KaPr} T. Kaletha, G. Prasad,
\emph{Bruhat–Tits Theory: A New Approach},
New Mathematical Monographs {\bf 44},
Cambridge University Press, 2023

\bibitem[Kot]{Kot} R.E. Kottwitz,
``Stable trace formula: cuspidal tempered terms", 
Duke Math. J. {\bf 51.3} (1984), 611--650

\bibitem[Lan1]{Lan1} R.P. Langlands,
``On the classification of irreducible representations of real algebraic groups",
pp. 101--170 in: 
\emph{Representation theory and harmonic analysis on semisimple Lie groups}, 
Math. Surveys Monogr. {\bf 31}, American Mathematical Society, 1989

\bibitem[Lan2]{Lan2} R.P. Langlands,
``Representations of abelian algebraic groups",
\emph{Olga Taussky-Todd: in memoriam},
Pacific J. Math. (1997), Special Issue, 231--250

\bibitem[MiPa]{MiPa} M. Mishra, B. Pattanayak,
``A note on depth preservation",
J. Ramanujan Math. Soc. {\bf 34.4} (2019), 393--400



\bibitem[MoPr]{MoPr2} A. Moy, G. Prasad,
``Jacquet functors and unrefined minimal K-types",
Comment. Math. Helvetici {\bf 71} (1996), 98--121

\bibitem[Neu]{Neu} J. Neukirch,
\emph{Class field theory},
Grundlehren der mathematischen Wissenschaften {\bf 280},
Springer--Verlag, Berlin, 1986

\bibitem[NSW]{NSW} J. Neukirch, A. Schmidt, K. Wingberg,
\emph{Cohomology of number fields. Second edition},
Springer--Verlag, Heidelberg, 2020

\bibitem[SoXu1]{SoXu1} M. Solleveld, Y. Xu,
``Hecke algebras and local Langlands correspondence for non-singular 
depth-zero representations", arXiv:2411.19846, 2024

\bibitem[SoXu2]{SoXu2} M. Solleveld, Y. Xu,
``Local Langlands correspondence for non-singular representations", 
preprint, 2025

\bibitem[Th\v{a}]{Tha} N. Q. Th\v{a}\'{n}g, 
``On Galois cohomology and weak approximation of connected 
reductive groups over fields of positive characteristic",
Proc. Japan Acad. Ser. A Math. Sci. {\bf 87.10} (2011), 203--208

\bibitem[Yu]{Yu} J.-K. Yu,	
``On the local Langlands correspondence for tori", pp. 177--183 in: 
\emph{Ottawa lectures on admissible representations of reductice p-adic groups}, 
Fields Institute Monographs, American Mathematical Society, 2009

\end{thebibliography}
\end{document}